\theoremstyle{plain}
\newtheorem{theorem}{Theorem}[section]
\newtheorem{lemma}[theorem]{Lemma}
\newtheorem{corollary}[theorem]{Corollary}
\newtheorem{proposition}[theorem]{Proposition}
\numberwithin{equation}{section}
\theoremstyle{definition}
\newtheorem{remark}[theorem]{Remark}
\newtheorem{definition}[theorem]{Definition}
\numberwithin{equation}{section}
  \newcommand{\N}{\mathbb{N}}
  \newcommand{\C}{\mathbb{C}}
  \newcommand{\om}{\Omega}
  \newcommand{\ch}{\mathcal{H}}
  \newcommand{\inn}{\emph{int\,}}
  \newcommand{\eps}{\varepsilon}
\newcommand{\mc}{M\raise.5ex\hbox{\small{c}}}  
\begin{document}
\title{Inverse and Implicit Function Theorems for Noncommutative Functions on Operator Domains}
\author{Mark E. Mancuso}
\newcommand{\address}{
 Mark E. Mancuso, Department of Mathematics and Statistics, Washington University in St. Louis,
    St. Louis, MO 63130

  \emph{E-mail address}:  \texttt{mark.mancuso@wustl.edu}}
\date{}
\pagestyle{fancy}
\fancyhead[R]{}
\fancyhead[L]{}
\fancyhead[LE]{\thepage}
\fancyhead[RO]{\thepage}
\fancyhead[CO]{Operator NC Inverse and Implicit Function Theorems}
\fancyhead[CE]{Mark E. Mancuso}
\fancyfoot{}
\setlength{\headheight}{15pt}
\renewcommand{\headrulewidth}{0pt}

\maketitle

\begin{abstract}
Classically, a noncommutative function is defined on a graded domain of tuples of square matrices. In this note, we introduce a notion of a noncommutative function defined on a domain $\om \subset B(\ch)^d$, where $\ch$ is an infinite dimensional Hilbert space. Inverse and implicit function theorems in this setting are established. When these operatorial noncommutative functions are suitably continuous in the strong operator topology, a noncommutative dilation-theoretic construction is used to show that the assumptions on their derivatives may be relaxed from boundedness below to injectivity.
\end{abstract}

{\bf Keywords:} Noncommutive functions, operator noncommutative functions, free analysis, inverse and implicit function theorems, strong operator topology, dilation theory.

\bigskip

{\bf MSC (2010):} Primary 46L52; Secondary 47A56, 47J07.

\section*{INTRODUCTION} \label{sec:introduction}

  Polynomials in $d$ noncommuting indeterminates can naturally be evaluated on $d$-tuples of square matrices of any size. The resulting function is graded (tuples of $n\times n$ matrices are mapped to $n\times n$ matrices) and preserves direct sums and similarities. Along with polynomials, noncommutative rational functions and  power series, the convergence of which has been studied for example in \cite{book}, \cite{pop1}, \cite{pop2}, serve as prototypical examples of a more general class of functions called \emph{noncommutative functions}. The theory of noncommutative functions finds its origin in the 1973 work of J. L. Taylor \cite{jtaylor}, who studied the functional calculus of noncommuting operators. Roughly speaking, noncommutative functions are to polynomials in noncommuting variables as holomorphic functions from complex analysis are to polynomials in commuting variables.

  Noncommutative functions are classically defined on domains sitting inside of a graded space of $d$-tuples of square matrices which is closed under direct sums. These matrices are usually over the complex numbers, but much of the theory works for matrices over a general module over a commutative ring. See the book by D. S. Kaliuzhnyi-Verbovetskyi and V. Vinnikov \cite{book} for a comprehensive, foundational treatment in this generality. In the complex case, for example, this means that a (matricial) noncommutative function is defined on a domain $D\subset M^d:=\bigsqcup_{n=1}^{\infty} M_n^d,$ where $M_n$ is the space of $n \times n$ complex matrices, and $D$ is assumed to be open in the Euclidean topology at each level and closed under direct sums: $x\in D$ at level $n$ and $y\in D$ at level $m$ implies $x\oplus y \in D$ at level $n+m$. A noncommutative function on $D$ is a graded function $f:D \rightarrow M^r$ which preserves direct sums and similarities: $x,y \in D$ and $s$ invertible with $s^{-1}xs \in D$ implies $f(x\oplus y)=f(x)\oplus f(y)$ and $f(s^{-1}xs)=s^{-1}f(x)s.$

  In this note, we consider noncommutative functions defined on a domain $\om \subset B(\ch)^d$ for an infinite dimensional separable Hilbert space $\ch.$ Noncommutative polynomials, rational functions, and power series may again be naturally evaluated at operator tuples in a suitable domain inside of $B(\ch)^d.$ With this point of view of a noncommutative function, we are no longer considering a space of infinitely many disjoint levels, but instead are working with a \emph{complete} space. This should be seen as a type of \emph{completion} of the classical matricial noncommutative setting. In this operatorial setting, noncommuative functions are still defined to be direct sum-preserving, but since the domain is no longer graded, we need to make identifications of $\ch$ with countable direct sums of $\ch$ via unitary equivalence. The precise definitions and further discussion will be given in Section \ref{sec: prelim}. Many foundational properties and formulas from the matricial theory, such as those found in the work of Helton, Klep, and McCullough \cite{HKM}, have analogues in this setting. We give their formulations and proofs in Section \ref{sec: foundational}. For example, a standard derivative formula now takes the form \[f\left(s^{-1}\begin{bmatrix}
   x & h \\
   0 & x
   \end{bmatrix}s\right)=s^{-1}\begin{bmatrix}
   f(x) & Df(x)[h] \\
   0 & f(x)
   \end{bmatrix}s,\]   where $s:\ch \rightarrow \ch \oplus\ch$ is linear and invertible. In the interest of clarity, when dealing with noncommuative functions on operator domains inside of $B(\ch)^d,$ we will use the abbreviation NC, and use the lower case nc for the matricial noncommutative setting. Agler and \mc Carthy proposed a definition similar to ours for NC functions on operator domains in \cite{amop} and gave a set of equivalent conditions for when such functions are approximable by  polynomials and have a realization formula on a polynomial polyhedron.

   Our main results are (global) inverse and implicit function theorems in the operatorial noncommutative setting. It is here that completeness and the structure and topologies of $B(\ch)$ play a key role. The inverse function theorem of J. E. Pascoe \cite{james} gave necessary and sufficient conditions for a matricial nc function to be invertible in terms of injectivity of its derivative map at all points. We prove similar results for operator NC functions.  In \cite{akv15}, quite general matricial nc results on the inverse and implicit function theorems are obtained in the setting of operator spaces and nilpotent matrices. In that paper, the authors exploit the existence of a natural "uniformly-open" topology and consider nc functions that are locally bounded in this topology which also have a completely bounded and invertible derivative. As we are not working with functions on graded domains in this note, such a topology is unavailable to us in our study of operator NC functions. In further contrast to the work in \cite{akv15} and other articles on noncommutative inversion, we give a sufficient condition guaranteeing the invertibility of the derivative map of an NC function at all points in a connected domain. Indeed, Theorem \ref{bbthm} states that for an NC function $f$ on a connected domain in $B(\ch)^d$, if the derivative $Df$ satisfies a noncommutative bounded below condition (see Definition \ref{ncbbp}) and we assume the existence of just one point $a$ in the domain such that $Df(a)$ is invertible, then we may conclude the invertibility of $Df(x)$ for \emph{every} $x$ in the domain. This result provides the basis for the inverse and implicit function theorems \ref{inverse} and \ref{implicit}.

   Finally, we end this note by considering operator NC functions that are continuous (in a precise sense detailed in Section \ref{sec: strongshift}) in the \emph{strong operator topology}. In fact, this allows us to further weaken the assumptions on the derivative maps. There does not appear to be much in the literature on the connection between noncommutative inversion results and continuity in the strong operator topology, but it is reasonable to impose this extra continuity condition on NC functions since the examples of interest in most applications (such as polynomials, rational functions, etc.) are strongly continuous on appropriately defined norm-bounded sets. In Section \ref{sec: strongshift}, ideas from noncommutative dilation theory are used to prove certain convergence and compactness-like results in the strong operator topology that interact well with noncommutative function theory. It is proved as a consequence of these results, in Theorem \ref{strongbb} and Corollary \ref{strongbbcor}, that injective strongly continuous NC functions, on suitable domains, have everywhere bounded below derivative. Therefore, in the operator setting, and especially in the case of strong operator continuity, we are able to obtain global inversion-type theorems with minor hypotheses on the derivative. 

  \section{PRELIMINARIES} \label{sec: prelim}

  In this section, we elaborate on our general setting and provide definitions and examples of our main objects of study: NC operator domains and functions. Operator noncommutative functions are to be defined on domains sitting inside of $B(\ch)^d$, where $\ch$ is an infinite dimensional separable Hilbert space over $\C$ and $B(\ch)$ is the Banach space of bounded linear operators on $\ch$ equipped with the operator norm.  Elements of $B(\ch)^d$ will sometimes be written as $d$-tuples with superscripts such as $(x^1,\ldots, x^d).$ We equip $B(\ch)^d$ with the maximum norm $$\Vert x\Vert := \max\{\Vert x^1\Vert, \ldots, \Vert x^d\Vert \},$$ which induces the product topology on $B(\ch)^d$ with respect to the norm topology on $B(\ch)$ and turns $B(\ch)^d$ into a complex Banach space. 

  The direct sum of $l$ copies of the Hilbert space $\ch,$ for $l\in \N \cup \{\infty\},$ will be denoted $\ch^{(l)}$.
  Direct sums of operators will often be written as a diagonal matrix: if $x_1, x_2 \ldots$ is a finite or countably infinite sequence of operators in $B(\ch)$ of length $l\in \N \cup \{\infty\},$ we will write the direct sum operator $\bigoplus_{i=1}^l x_i$ as the diagonal matrix $$\begin{bmatrix}
    x_{1} & &  \\
    & x_{2} & \\
    & & \ddots     
  \end{bmatrix}: \ch^{(l)} \rightarrow \ch^{(l)}.$$

  Operations on $B(\ch)^d$ are defined component-wise: for $L\in B(\ch)$ and $x\in B(\ch)^d,$ define $$L(x^1,\ldots, x^d):=(Lx^1,\ldots, Lx^d) \,\,\,\, \text{and} \,\,\,\, (x^1,\ldots, x^d)L:=(x^1L,\ldots, x^dL).$$ Similarly, if $s: \ch \rightarrow \ch^{(l)}$ is an invertible linear map and $z\in B(\ch^{(l)})^d$, we define $$s^{-1}zs:=(s^{-1}z^1s,\ldots,s^{-1}z^ds).$$  Direct sums of operator tuples are also defined component-wise. If $x_1, x_2, \ldots$ is a finite or countably infinite sequence of elements of $B(\ch)^d$ of length $l\in \N \cup \{\infty\},$ we define their direct sum to be the element of $B(\ch^{(l)})^d$ given by $$\begin{bmatrix}
    x_{1} & & \\
    & x_{2} &  \\
    & & \ddots 
  \end{bmatrix}:= \left(\begin{bmatrix}
    x_{1}^1 & & \\
    & x_{2}^1 &  \\
    & & \ddots  
  \end{bmatrix},\ldots, \begin{bmatrix}
    x_{1}^d & &  \\
    & x_{2}^d &  \\
    & & \ddots 
  \end{bmatrix}\right).$$ Expressions such as $$\begin{bmatrix}
  x & y \\
  z & w 
\end{bmatrix}$$ for $x,y,z,w \in B(\ch)^d$ are similarly defined.
  We say a subset $\om$ of $B(\ch)^d$ is \emph{unitarily invariant} if whenever $x\in \om$ and $u \in B(\ch)$ is a unitary operator, then $u^*xu \in \om.$ In what follows, the interior of a set is with respect to the norm topology on $B(\ch)^d$.

  \begin{definition} \label{ncd}
  A set $\om\subset B(\ch)^d$ is called an \emph{NC domain} if there exists a sequence $\{\om_k\}_{k=1}^{\infty}$ of subsets of $\om$ with the following properties:

\begin{enumerate}
  \item  $\om_k \subset \emph{\inn}\om_{k+1}$ for all $k$ and $\om=\bigcup_{k=1}^{\infty} \om_k$.

  \item Each $\om_k$ is bounded and unitarily invariant.

  \item Each $\om_k$ is closed under countable direct sums: If $x_n$ is a sequence in $\om_k$ of length $l\in \N\cup \{\infty\}$, then there exists a unitary $u:\ch\rightarrow \ch^{(l)}$ such that \begin{align} \label{exhaustds} u^{-1}\begin{bmatrix}
    x_{1} & &  \\
    & x_{2} & \\
    & & \ddots  
  \end{bmatrix}u \in \om_k.\end{align}
\end{enumerate}
  \end{definition}

  NC domains are open subsets in the norm topology of $B(\ch)^d.$ Note that by unitary invariance of each level $\om_k,$ given a finite or countably infinite sequence $x_n$ in $\om_k$ of length $l,$ as soon as (\ref{exhaustds}) holds for some unitary $u:\ch\rightarrow \ch^{(l)},$ it will in fact hold for \emph{all} unitaries $v:\ch\rightarrow \ch^{(l)}$ by considering $u^{-1}v$.

  A large supply of examples of operator NC domains can be given as follows. Let $\delta$ be an $I\times J$ matrix of polynomials in $d$ noncommuting variables (i.e. a matrix whose entries are elements of the free associative algebra $\C\langle x^1,\ldots, x^d \rangle$). Define $$B_{\delta}:=\{x\in B(\ch)^d : \Vert \delta(x)\Vert<1 \},$$ where the norm is taken in $B(\ch^{(J)},\ch^{(I)}).$ Important concrete examples take this form for particular choices of $\delta$. For example, the noncommutative polydisk $\{x\in B(\ch)^d: \Vert x\Vert <1\}$ in $B(\ch)^d$ may be realized as a $B_{\delta}$ for the diagonal matrix $$\delta(x^1,\ldots, x^d)=
  \begin{bmatrix}
x^1 & & \\
& \ddots & \\
& & x^d
  \end{bmatrix}.$$ The noncommutative operatorial ball $$\{x\in B(\ch)^d : \Vert x^1(x^1)^* +\dots + x^d(x^d)^* \Vert^{1/2} <1 \}$$ is a $B_{\delta}$ for the row matrix $\delta(x)=[x^1 \cdots \, x^d]$.

  To see that any $B_{\delta}$ is in fact an NC domain according to Definition \ref{ncd}, one may take the exhausting sequence to be \begin{align}\label{bdelta} \om_k=\{x\in B(\ch)^d : \Vert \delta(x)\Vert\leq 1-1/k\}\cap \{x \in B(\ch)^d : \Vert x\Vert \leq k\}.\end{align} It is immediately checked that $\{\om_k\}$ has all of the required properties. Another example of an NC domain is the set of invertible elements of $B(\ch),$ where one may use the exhausting sequence $\om_k=\{x \in B(\ch): \Vert x\Vert \leq k, \Vert x^{-1}\Vert \leq k\}.$

  Let us make a few remarks about Definition \ref{ncd}. Our notion of operator NC domain using exhausting sequences is a way to reasonably think of the (open) domains as being closed under countably infinite direct sums while still providing a sufficiently large class of examples. Even for bounded domains, it will rarely be the case that one may take an arbitrary sequence in the domain and conclude that its direct sum (conjugated by a sufficient unitary) will remain in the domain.  Indeed, this fails even for the open unit ball in $d=1$: consider the sequence $(1-1/n)1_{\ch}$. When we restrict to sequences contained in a fixed level of an exhaustion as in Definition \ref{ncd}, however, it is a much less stringent requirement for (\ref{exhaustds}) to hold.

  We want to consider functions which act appropriately on NC domains. Namely, we make the following definition of an operator NC function.

  \begin{definition} \label{ncf}
  Let $\om \subset B(\ch)^d$ be an NC domain. We say a function $f:\om \rightarrow B(\ch)^r$ is an \emph{NC function} if it preserves direct sums in the sense that whenever $x,y \in \om$ and whenever $s:\ch\rightarrow \ch^{(2)}$ is a bounded invertible linear map with $$s^{-1}\begin{bmatrix}
    x & 0 \\
   0 & y 
  \end{bmatrix}s \in \om,$$ then $$f\left(s^{-1}\begin{bmatrix}
    x & 0  \\
   0 & y   
  \end{bmatrix}s\right)=s^{-1}\begin{bmatrix}
    f(x) & 0  \\
   0 & f(y)  
  \end{bmatrix}s.$$
  \end{definition}

  As a consequence of Lemma \ref{L}, proved in Section \ref{sec: foundational}, NC functions on operator domains preserve intertwinings, just as in the matricial nc theory: if $L\in B(\ch)$ and $Lx=yL,$ then $Lf(x)=f(y)L.$ From this observation, it follows that whenever $f:\om \subset B(\ch)^d \rightarrow B(\ch)^r$ is NC and $\{\om_k\}$ is an exhausting sequence of $\om$ as in Definition \ref{ncd}, then each $f(\om_k)$ is norm-bounded. In particular, NC functions are automatically locally bounded. To see this, take a sequence $x_n$ in a fixed $\om_k$. There is a unitary $u:\ch \rightarrow \ch^{(\infty)}$ such that $x:=u^{-1}[\bigoplus x_n ] u \in \om_k.$ Define $\Gamma_n :\ch^{(\infty)}\rightarrow \ch$ to be projection onto the $n$th component and let $L_n:=\Gamma_nu.$ By definition of $L_n,$ we have $L_nx=x_nL_n$ for all $n,$ and since $f$ preserves intertwinings, we then have $L_nf(x)=f(x_n)L_n.$ Since $f(x)$ is an element of $B(\ch)^r,$ it has finite norm, so this relation implies the $f(x_n)$ are uniformly bounded. Similar reasoning lets us conclude that operator NC functions actually preserve \emph{countable} direct sums: if $x_n$ is a sequence in $\om$ of length $l \in \N \cup \{\infty\}$ and $s:\ch \rightarrow \ch^{(l)}$ is linear and invertible with $$s^{-1}\begin{bmatrix}
    x_{1} & &  \\
    & x_{2} & \\
    & & \ddots  
  \end{bmatrix}s \in \om,$$ then $f(x_n)$ is uniformly bounded and we have $$f\left(s^{-1}\begin{bmatrix}
    x_{1} & &  \\
    & x_{2} & \\
    & & \ddots  
  \end{bmatrix}s\right)=s^{-1}\begin{bmatrix}
    f(x_{1}) & &  \\
    & f(x_{2}) & \\
    & & \ddots  
  \end{bmatrix}s.$$

   If we write $f:\om\rightarrow B(\ch)^r$ as $f=(f^1,\ldots, f^r)$, where each $f^j: \om \rightarrow B(\ch)$, then it follows from the definitions that $f$ is an NC function if and only if each $f^j$ is an NC function. As discussed in the introduction, any polynomial in $d$ noncommuting variables is an NC function when defined on any NC domain $\om \subset B(\ch)^d$. Furthermore, rational functions and noncommutative power series, on appropriately defined NC domains, provide us with a sizable class of prototypical examples of NC functions.

  For a simple, explicit such example, consider the rational function \begin{align} \label{example}f(x,y):=(1-xy)^{-1}=\sum_{n=0}^{\infty} (xy)^n\end{align} defined on the unit bidisk $\om=\{(x,y): \Vert x\Vert<1, \Vert y\Vert<1\}$ in $B(\ch)^2$. We verify here through a direct calculation that this function is in fact NC. Let $(x^1,x^2)$ and $(y^1,y^2)$ be points in $\om$ and suppose $$s^{-1}\begin{bmatrix}
(x^1,x^2) & 0 \\
0 & (y^1,y^2)  
  \end{bmatrix}s \in \om.$$ Then, \begin{align*}
f\left(s^{-1}\begin{bmatrix}
(x^1,x^2) & 0 \\
0 & (y^1,y^2)  
  \end{bmatrix}s\right)&= \sum_{n=0}^{\infty} \left(s^{-1}\begin{bmatrix}
x^1 & 0 \\
0 & y^1  
  \end{bmatrix}\begin{bmatrix}
x^2 & 0\\
0 & y^2 
  \end{bmatrix}s\right)^n \\
  &= s^{-1}\sum_{n=0}^{\infty}\begin{bmatrix}
(x^1x^2)^n & 0 \\
0 & (y^1y^2)^n 
  \end{bmatrix}s \\
  &= s^{-1}\begin{bmatrix}
f(x^1,x^2) & 0 \\
0 & f(y^1,y^2) 
  \end{bmatrix}s, 
  \end{align*} as claimed.

  We conclude this section with terminology that will be used in the statements of the inverse and implicit function theorems. Recall that an operator $T\in B(X)$, where $X$ is a Banach space, is bounded below if there is a constant $C>0$ such that $\Vert Tx\Vert\geq C\Vert x\Vert$ for all $x \in X.$

  \begin{definition} \label{ncbbp}
Let $\om \subset B(\ch)^d$ be an NC domain. A map $\Psi: \om \rightarrow B(B(\ch)^r)$ is said to have the \emph{NC bounded below property} if whenever $\{x_n\}_{n=1}^{\infty}$ is a bounded sequence in $\om$ such that $\Psi(x_n)$ is bounded below for every $n,$ and whenever $u:\ch \rightarrow \ch^{(\infty)}$ is unitary such that $$z:=u^{-1}\begin{bmatrix}
    x_{1} & &  \\
    & x_{2} & \\
    & & \ddots  
  \end{bmatrix}u \in \om,$$ then $\Psi(z)$ is bounded below.
  \end{definition}

  We note that in the notation of Definition \ref{ncbbp}, the supposed bound below for $\Psi(x_n)$ is allowed to depend on $n.$ When $\Psi$ arises naturally from an operator NC function, for example when $\Psi$ is the derivative map of an NC function, the argument in the proof of Theorem \ref{bbthm} shows that $\Psi$ being bounded below when evaluated at the direct sum of such a sequence $x_n$ implies a \emph{uniform} bound below for the sequence $\Psi(x_n).$ As a result, we show that this property characterizes global invertibility of the derivative of NC functions on connected NC domains. The NC bounded below property, when imposed on the derivative map, may be thought of as an operatorial analogue of injectivity of the derivative.

    \section{MAIN RESULTS} \label{sec:main}

  In this section, we list the main results of the paper; for the detailed proofs, see Section \ref{sec: proofs}.  The notion of connectedness is always with respect to the norm topology on $B(\ch)^d$.  Operator NC functions will be shown to automatically be Fr\'{e}chet differentiable, and the notation $Df(x)$ denotes the derivative mapping $B(\ch)^d \rightarrow B(\ch)^r$ of $f$ at the point $x$ in the domain of $f$. We denote by $Df$ the map $x\mapsto Df(x).$

  Our primary objective is to prove an inverse function theorem for NC functions defined on operator domains, as described in Section \ref{sec: prelim}. Therefore, we begin by studying the derivative of such functions and ask when are the derivative maps $Df(x)$ invertible for \emph{every} $x$ in the domain of $f.$ Theorem \ref{bbthm} below provides an answer to this question on connected NC domains.

\begin{theorem} \label{bbthm}
Let $f:\om \subset B(\ch)^d \rightarrow B(\ch)^d$ be an NC function and suppose $\om$ is connected. If $Df$ has the NC bounded below property, and there exists a point $a\in \om$ such that $Df(a)$ is invertible, then $Df(x)$ is invertible for every $x\in \om.$
\end{theorem}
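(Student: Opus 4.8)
The plan is to argue that the set $G := \{x \in \om : Df(x) \text{ is invertible}\}$ is nonempty, open, and closed in $\om$; connectedness of $\om$ then forces $G = \om$. Nonemptiness is the hypothesis ($a \in G$). Openness should follow from the fact that the invertible operators in $B(B(\ch)^d)$ form an open set, together with continuity of $x \mapsto Df(x)$ — once we know $f$ is Fr\'echet differentiable (asserted in the excerpt) and that $Df$ depends continuously on $x$, which is itself a consequence of the NC derivative formula and local boundedness of NC functions. So the real content is closedness of $G$ in $\om$.

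To prove $G$ is closed, suppose $x_n \in G$ with $x_n \to x \in \om$. I want to conclude $x \in G$. First, since invertibility is equivalent to being bounded below plus having dense range, split the task: show $Df(x)$ is bounded below and show it is surjective. For the bounded below part, this is exactly where the NC bounded below property enters. The idea is: the sequence $\{x_n\}$ is bounded (it converges), and each $Df(x_n)$ is bounded below (being invertible). Choose $k$ with $x \in \om_k$; since $\om_k \subset \inn \om_{k+1}$ and $x_n \to x$, eventually $x_n \in \om_{k+1}$, so WLOG the whole sequence lies in a fixed bounded unitarily-invariant level closed under countable direct sums. Pick a unitary $u : \ch \to \ch^{(\infty)}$ with $z := u^{-1}(\bigoplus_n x_n) u \in \om_{k+1} \subset \om$. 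The NC bounded below property applied to $\Psi = Df$ then gives that $Df(z)$ is bounded below. Now I need to transfer this bound back to the individual points and then pass to the limit. Because $f$ preserves intertwinings (the consequence of Lemma~\ref{L} cited in the excerpt), the coordinate projections $L_n = \Gamma_n u$ satisfy $L_n z = x_n L_n$, hence $L_n$ intertwines $Df(z)$ and $Df(x_n)$ in the appropriate sense (differentiating the intertwining relation, or applying the NC derivative formula to the block-upper-triangular matrix with $h$ in the corner). This should yield a \emph{uniform} lower bound $\|Df(x_n)[h]\| \geq C\|h\|$ independent of $n$. Then continuity of $Df$ lets me pass $n \to \infty$ to get $\|Df(x)[h]\| \geq C\|h\|$, so $Df(x)$ is bounded below.

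For surjectivity of $Df(x)$: a bounded-below operator has closed range, so it suffices to show the range is dense, or alternatively to produce right-inverses. Here I would use that each $Df(x_n)$ is invertible with $\|Df(x_n)^{-1}\| = 1/C_n$; if the $C_n$ could be taken uniform (which the previous paragraph's uniform lower bound suggests, at least for the bounded-below constant) and if additionally $\sup_n \|Df(x_n)^{-1}\| < \infty$, then the inverses converge and $Df(x)$ is invertible. To get the uniform bound on $\|Df(x_n)^{-1}\|$, apply the same intertwining/dilation trick to the map $x \mapsto Df(x)^{-1}$ on the sub-level where it is defined, or more directly: the NC bounded below property gives $Df(z)$ bounded below, but $z$ is a direct sum, and $Df$ evaluated at $z$ is (by the direct-sum-preservation of the derivative, which follows from $f$ being NC) the direct sum $\bigoplus_n Df(x_n)$ up to the unitary $u$; an operator which is a direct sum of bounded-below operators is bounded below if and only if the lower bounds are uniform. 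This simultaneously gives the uniform constant $C$ and, combined with invertibility of each $Df(x_n)$, shows $\bigoplus_n Df(x_n)$ is invertible with uniformly bounded inverse, whence each $\|Df(x_n)^{-1}\| \leq 1/C$. Then $Df(x_n)^{-1} \to Df(x)^{-1}$ in norm, so $x \in G$.

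The main obstacle I anticipate is the careful bookkeeping in transferring the bound below from $Df(z)$ back to each $Df(x_n)$ and establishing its uniformity — i.e., verifying that ``$Df$ at a direct sum equals the direct sum of the $Df(x_n)$'' rigorously from the definition of NC function and the derivative formula, and that the NC bounded below property of $Df(z)$ therefore forces a \emph{uniform} lower bound, as the remark following Definition~\ref{ncbbp} promises. A secondary subtlety is confirming that $Df$ is norm-continuous in $x$ on each level of the exhaustion, which is needed both for openness of $G$ and for the limit step; this should follow from the block-matrix derivative formula together with the automatic local boundedness of NC functions established in Section~\ref{sec: prelim}, but it requires that the estimate be made uniform on a neighborhood.
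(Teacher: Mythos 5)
Your proposal is correct and follows essentially the same route as the paper: show the set where $Df$ is invertible is open (via $C^1$-smoothness of $f$) and closed (by forming the countable direct sum $z=u^{-1}[\bigoplus_n x_n]u$ in a fixed level $\om_k$, invoking the NC bounded below property at $z$, transferring a uniform lower bound to each $Df(x_n)$ through the NC structure of the derivative map, and passing to the norm limit via the fact that surjective maps bounded below by a fixed $\alpha$ form a closed set). Your final limit step is exactly the content of the paper's Lemma \ref{banach}, so no new ideas are needed beyond the bookkeeping you already identify.
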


With this result giving a sufficient condition for the invertibility of the derivative map of an NC function at \emph{all} points, we arrive at an operatorial NC inverse function theorem. Theorem \ref{bbthm} justifies the NC bounded below property as a substitute for injectivity in the general operatorial setting. The hypotheses for the inverse function theorem are the same as in Theorem \ref{bbthm}. 

\begin{theorem} \label{inverse} \emph{(Inverse Function Theorem)}
Let $f:\om \subset B(\ch)^d \rightarrow B(\ch)^d$ be an NC function and suppose $\om$ is connected. If $Df$ has the NC bounded below property, and there exists a point $a\in \om$ such that $Df(a)$ is invertible, then $f(\om)$ is an NC domain and $f^{-1}: f(\om) \rightarrow \om$ exists and is an NC function.
\end{theorem}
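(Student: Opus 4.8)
The plan is to deduce the inverse function theorem from Theorem \ref{bbthm} together with a standard Banach-space inverse function theorem, and then to check by hand that the locally defined inverse is globally well-defined and respects the NC structure. First I would apply Theorem \ref{bbthm}: under the stated hypotheses, $Df(x)$ is invertible (as a bounded operator on $B(\ch)^d$, hence boundedly invertible by the open mapping theorem) at \emph{every} $x\in\om$. Since operator NC functions are Fr\'echet differentiable (as asserted in Section \ref{sec:main} and to be proved in Section \ref{sec: foundational}) with $Df$ continuous — this continuity should follow from the derivative formula \[f\left(s^{-1}\begin{bmatrix} x & h \\ 0 & x \end{bmatrix}s\right)=s^{-1}\begin{bmatrix} f(x) & Df(x)[h] \\ 0 & f(x) \end{bmatrix}s\] together with local boundedness of $f$ — the classical Banach-space inverse function theorem applies at each point: $f$ is a local diffeomorphism near every $x\in\om$, with a local inverse that is Fr\'echet differentiable.

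Next I would establish \emph{global} injectivity of $f$ on $\om$. This is the step I expect to be the main obstacle, since local injectivity plus connectedness does not by itself give global injectivity in infinite dimensions. The idea is to use the NC (direct-sum and intertwining) structure: if $f(x)=f(y)$ with $x,y\in\om$, form a point $z\in\om$ unitarily equivalent to $x\oplus y$ (legitimate because $x,y$ lie in a common level $\om_k$ of an exhaustion, which is closed under direct sums), and compare $f(z)$, computed via direct-sum preservation, against the behaviour of $f$ near the "diagonal". Concretely, one uses invertible $s:\ch\to\ch^{(2)}$ close to a unitary identifying $\ch$ with the diagonal copy of $\ch$ in $\ch\oplus\ch$, together with the derivative formula above, to show that $Df(x)$ would fail to be injective unless $x=y$; invertibility of $Df$ at all points, just established, then forces $x=y$. (Here one may need to perturb within $\om_k\subset\inn\om_{k+1}$ to keep all relevant tuples in the domain.) I would model this argument on Pascoe's inverse function theorem \cite{james}, adapting the nilpotent/Jordan-block manipulations to the operator setting.

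Finally I would verify that $f(\om)$ is an NC domain and that $f^{-1}$ is an NC function. For the NC-domain claim, I would take an exhausting sequence $\{\om_k\}$ for $\om$ and set $W_k:=f(\om_k)$; each $W_k$ is bounded (image of a bounded set under an NC function is norm-bounded, as noted after Definition \ref{ncf}), unitarily invariant (because $f$ preserves intertwinings, in particular unitary conjugations), and closed under countable direct sums (again because $f$ preserves countable direct sums, carrying the direct sum of a sequence in $\om_k$ — suitably conjugated back into $\om_k$ — to the corresponding direct sum in $W_k$). The nesting $W_k\subset\inn W_{k+1}$ follows from $\om_k\subset\inn\om_{k+1}$ together with $f$ being an open map (local diffeomorphism at each point). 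Thus $\{W_k\}$ exhausts $f(\om)$ and $f(\om)$ is an NC domain. That $f^{-1}:f(\om)\to\om$ is well-defined follows from the global injectivity and surjectivity onto its image; it is Fr\'echet differentiable with $D(f^{-1})(f(x))=Df(x)^{-1}$ by the local theory; and it preserves direct sums because $f$ does — given $w,w'\in f(\om)$ with $w=f(x),\ w'=f(x')$ and an invertible $s$ with $s^{-1}(w\oplus w')s\in f(\om)$, applying $f$ to $s^{-1}(x\oplus x')s$ (which lies in $\om$ since it is the unique preimage) and using direct-sum preservation of $f$ shows $f^{-1}(s^{-1}(w\oplus w')s)=s^{-1}(x\oplus x')s=s^{-1}(f^{-1}(w)\oplus f^{-1}(w'))s$, as required.
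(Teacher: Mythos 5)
Your overall architecture matches the paper's: Theorem \ref{bbthm} gives invertibility of $Df$ at every point, injectivity of $f$ follows (the paper simply cites Theorem \ref{injthm}, which is precisely the Pascoe-type statement you propose to re-derive via direct-sum and derivative-formula manipulations), openness of $f$ via the classical Banach-space inverse function theorem yields the nesting $f(\om_k)\subset \text{int}\, f(\om_{k+1})$, and the images $f(\om_k)$ serve as the exhaustion of $f(\om)$.

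However, your final step --- that $f^{-1}$ preserves direct sums --- contains a genuine gap. You apply $f$ to $s^{-1}(x\oplus x')s$ and justify its membership in $\om$ by saying it ``lies in $\om$ since it is the unique preimage.'' This is circular: what you actually know is that $s^{-1}(f(x)\oplus f(x'))s\in f(\om)$, hence there is a unique $z\in\om$ with $f(z)=s^{-1}(f(x)\oplus f(x'))s$; it is not known a priori that $z$ equals $s^{-1}(x\oplus x')s$, and you cannot evaluate $f$ at the latter tuple without first knowing it belongs to $\om$. The difficulty is that $s$ is merely invertible, not unitary: Definition \ref{ncd} only puts \emph{unitary} conjugates of direct sums into $\om$, and NC domains are not closed under arbitrary similarities, so $s^{-1}(x\oplus x')s\in\om$ is exactly what must be proved. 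The paper resolves this with an intertwining argument: choose a unitary $u$ with $x_0:=u^{-1}(x\oplus x')u\in\om$, set $L:=s^{-1}u$, and note $f(x_0)=L^{-1}f(z)L$; then Lemma \ref{L}, applied to a point with off-diagonal entry $\eps(Lx_0-zL)$, together with injectivity of $f$, forces $Lx_0=zL$, i.e. $z=Lx_0L^{-1}=s^{-1}(x\oplus x')s$. This simultaneously gives the needed membership and the identity $f^{-1}\bigl(s^{-1}(f(x)\oplus f(x'))s\bigr)=s^{-1}\bigl(f^{-1}(f(x))\oplus f^{-1}(f(x'))\bigr)s$. Without an argument of this kind, your concluding step does not go through; the rest of your outline is sound.
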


As one might expect, the inverse function theorem, Theorem \ref{inverse}, gives rise to an operatorial implicit function theorem under the hypothesis that an augmented derivative map satisfies the NC bounded below property. The notation $Z_f$ denotes the zero set of the function $f$. In the implicit function theorem, we write, for notational convenience, $(h^{d-r+1},\ldots, h^d)$ for elements of $B(\ch)^r.$ 

\begin{theorem} \label{implicit} \emph{(Implicit Function Theorem)}
Let $f:\om \subset B(\ch)^d \rightarrow B(\ch)^r$ be NC, where $1\leq r\leq d-1$, and $\om$ is connected. Suppose the map $\Psi:\om \rightarrow B(B(\ch)^r)$ defined by $$\Psi(x)(h^{d-r+1},\ldots, h^d)=Df(x)[0,\ldots,0,h^{d-r+1},\ldots, h^d]$$ has the NC bounded below property, and there exists a point $a \in \om$ such that $\Psi(a)$ is invertible.

Then, there exists $V\subset B(\ch)^{d-r}$ an NC domain and $\phi:V\rightarrow B(\ch)^r$ an NC function such that $$Z_f=\{(y,\phi(y)): y \in V\}.$$ Furthermore, $V$ is given by the projection onto the first $d-r$ coordinates of the zero set $Z_f.$
\end{theorem}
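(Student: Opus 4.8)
The plan is to deduce the Implicit Function Theorem directly from the Inverse Function Theorem, Theorem \ref{inverse}, by the usual device of augmenting $f$ with the identity in the first $d-r$ coordinates. First I would define $F:\om \to B(\ch)^{d-r}\oplus B(\ch)^r = B(\ch)^d$ by
\begin{equation*}
F(y,w) = \bigl(y,\, f(y,w)\bigr), \qquad y\in B(\ch)^{d-r},\ w\in B(\ch)^r,
\end{equation*}
where we split a tuple $x\in B(\ch)^d$ as $x=(y,w)$ with $y=(x^1,\ldots,x^{d-r})$ and $w=(x^{d-r+1},\ldots,x^d)$. The first task is to check that $F$ is an NC function on $\om$: since the identity map on the first $d-r$ coordinates is plainly NC and $f$ is NC, preservation of direct sums for $F$ follows coordinatewise from the definition, and likewise $F$ is Fr\'echet differentiable with
\begin{equation*}
DF(x)[k,h] = \bigl(k,\, Df(x)[k,h]\bigr).
\end{equation*}

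Next I would verify that $DF$ satisfies the hypotheses of Theorem \ref{inverse}, namely the NC bounded below property together with invertibility at one point. From the block form of $DF(x)$ one sees that $DF(x)$ is invertible (resp.\ bounded below) precisely when the restricted map $h\mapsto Df(x)[0,h]$, i.e.\ $\Psi(x)$, is invertible (resp.\ bounded below): indeed, $DF(x)[k,h]=0$ forces $k=0$ and then $\Psi(x)h=0$, and surjectivity of $DF(x)$ reduces to surjectivity of $\Psi(x)$ after subtracting off the $k$-part; a quantitative version of this bookkeeping turns a lower bound for $\Psi(x)$ into one for $DF(x)$ with a comparable constant. Hence $DF(a)$ is invertible because $\Psi(a)$ is, and the NC bounded below property for $DF$ follows from that of $\Psi$: given a bounded sequence $x_n$ in $\om$ with $DF(x_n)$ bounded below (equivalently $\Psi(x_n)$ bounded below) and a unitary $u$ with $z=u^{-1}\bigl[\bigoplus x_n\bigr]u\in\om$, the hypothesis on $\Psi$ gives that $\Psi(z)$ is bounded below, hence so is $DF(z)$. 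I expect this translation between the block-triangular derivative and $\Psi$ to be the one genuinely delicate point — one must be careful that ``bounded below'' is the right notion (not merely injective) and that the constants transfer, and one should note that $z$ itself has the split form since the unitary $u$ acts diagonally on the $B(\ch)^d$ coordinates.

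Applying Theorem \ref{inverse} to $F$ then yields that $F(\om)$ is an NC domain and $F^{-1}:F(\om)\to\om$ is an NC function. Writing $F^{-1}$ in components as $F^{-1}(y,v)=\bigl(G_1(y,v),G_2(y,v)\bigr)$, the relation $F\circ F^{-1}=\mathrm{id}$ forces $G_1(y,v)=y$, so $F^{-1}(y,v)=\bigl(y,G_2(y,v)\bigr)$ with $f\bigl(y,G_2(y,v)\bigr)=v$ for all $(y,v)\in F(\om)$. Now set
\begin{equation*}
V := \{\,y : (y,0)\in F(\om)\,\}, \qquad \phi(y):=G_2(y,0).
\end{equation*}
Then $(y,\phi(y))\in\om$ and $f(y,\phi(y))=0$ for every $y\in V$; conversely if $(y,w)\in Z_f$ then $F(y,w)=(y,0)\in F(\om)$ and applying $F^{-1}$ gives $w=G_2(y,0)=\phi(y)$, so $Z_f=\{(y,\phi(y)):y\in V\}$ and $V$ is exactly the projection of $Z_f$ onto the first $d-r$ coordinates. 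Finally I would check that $V$ is an NC domain and $\phi$ an NC function: $V$ is the preimage of the ``slice'' $\{0\}$ under the NC-domain structure of $F(\om)$, and concretely, if $\{\Theta_k\}$ is an exhausting sequence for $F(\om)$ in the sense of Definition \ref{ncd}, one takes $V_k=\{y:(y,0)\in\Theta_k\}$ and verifies unitary invariance, boundedness, and closure under countable direct sums for $V_k$ using the fact that the unitaries implementing direct sums act diagonally, so that a direct sum of points $(y_n,0)$ is again of the form $(\text{direct sum of }y_n,\,0)$ up to the same unitary conjugation; that $\phi=G_2(\cdot,0)$ is NC follows because $y\mapsto(y,0)$ is NC and $G_2$ is a component of the NC function $F^{-1}$, and composition and restriction to coordinates preserve the NC property.
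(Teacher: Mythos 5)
Your proposal is correct and follows essentially the same route as the paper: augment $f$ to $F(x)=(x^1,\ldots,x^{d-r},f(x))$, show that boundedness below (and invertibility at $a$) of $\Psi(x)$ transfers to $DF(x)$ so that $DF$ inherits the NC bounded below property, apply Theorem \ref{inverse}, and read off $\phi$ from the last $r$ components of $F^{-1}(\cdot,0)$. The only cosmetic difference is that you exhaust $V=\{y:(y,0)\in F(\om)\}$ by slicing an exhaustion of $F(\om)$, whereas the paper takes $V_k=\{y:\exists z,\ (y,z)\in\om_k,\ f(y,z)=0\}$; these coincide and both verifications are routine.
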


Theorem \ref{implicit} is an operatorial analogue of Agler and \mc Carthy's implicit function theorem (Theorem 6.1 in \cite{amif16}) for the fine matricial nc topology. In the operatorial setting, we require a slightly stronger assumption than merely injectivity of the maps $\Psi(x)$, which is the assumption for the implicit function theorem in \cite{amif16}. For further emphasis, the parametrizing function $\phi$ in Theorem \ref{implicit}, being itself operator NC, is \emph{infinite} direct sum-preserving. It is important to note that the conclusions of Theorems \ref{inverse} and \ref{implicit} are \emph{global}, a phenomenon that is rare outside of the noncommutative setting. As mentioned in the introduction, results similar to Theorems \ref{inverse} and \ref{implicit} are obtained in \cite{akv15} for a quite general matricial nc setting with local invertibility conclusions and hypotheses of analyticity in the "uniformly-open" topology and a completely bounded and invertible derivative map with completely bounded inverse. In the operator NC setting, we note once more that the notion of a uniformly-open topology is no longer available, so we instead make extensive use of the completeness of $B(\ch)$ and its various topologies.

It is reasonable to ask if additional structure imposed on the NC functions in the strong operator topology (SOT) allows us to weaken our assumptions on their derivatives. If we assume the NC operator domain is exhausted by certain SOT-closed sets, and impose strong continuity on the NC function, we arrive at the following, rather surprising theorem. In particular, it is valid for maps whose components are polynomials and rational functions, as these are SOT continuous on appropriate norm-bounded sets. 

\begin{theorem} \label{strongbb}
Suppose $f:\om \subset B(\ch)^d \rightarrow B(\ch)^r$ is a strong NC function. If $Df(x)$ is injective for every $x\in \om$, then $Df(x)$ is bounded below for every $x\in \om.$
\end{theorem}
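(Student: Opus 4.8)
The plan is to argue by contradiction. Suppose $Df(x)$ is injective for every $x\in\om$ but that $Df(a)$ fails to be bounded below at some $a\in\om$, so that there is a sequence $h_n\in B(\ch)^d$ with $\Vert h_n\Vert=1$ and $\Vert Df(a)[h_n]\Vert\to 0$. Fix an exhausting sequence $\{\om_k\}$ witnessing that $\om$ is a strong NC domain (so in particular each $\om_k$ is closed in the strong operator topology), and choose $k$ with $a\in\om_k$. The eventual goal is to produce a single point $z\in\om$ together with a \emph{nonzero} $H$ satisfying $Df(z)[H]=0$, which will contradict the injectivity of $Df(z)$.

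First I would pass to an infinite ampliation. Since $\om_k$ is closed under countable direct sums, fix a unitary $u:\ch\to\ch^{(\infty)}$ with $z:=u^{-1}\big(\bigoplus_{n=1}^{\infty}a\big)u\in\om_k$. Differentiating the countable-direct-sum identity for $f$ along the block-diagonal curve $t\mapsto u^{-1}\big(\bigoplus_n(a+th_n)\big)u$ (which lies in $\om$ for small $t$, since the perturbing blocks are uniformly bounded and $a$ is an interior point of $\om_{k+1}$, and using that Fréchet differentiability at $a$ gives a remainder that is $o(t)$ uniformly over the norm-one directions $h_n$) yields
\[
Df(z)\Big[u^{-1}\big(\textstyle\bigoplus_n h_n\big)u\Big]=u^{-1}\big(\textstyle\bigoplus_n Df(a)[h_n]\big)u,
\]
whose blocks tend to $0$ in norm while the argument has norm $1$. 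Hence $Df(z)$ is again not bounded below; but since this is a priori consistent with injectivity, the ampliation must be exploited more carefully.

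The heart of the matter is the noncommutative dilation construction and the strong-operator convergence and compactness-type results of Section~\ref{sec: strongshift}. I would use these to upgrade the failure of the bounded below property at $z$ into a genuine nonzero element of $\ker Df(z)$: starting from a norm-bounded sequence that is annihilated in the limit by $Df(z)$, one reorganizes it against the shift (creation-operator) structure underlying the dilation model, extracts by the compactness-type results a strong-operator limit $H\neq 0$ that still lies in the relevant level of the exhaustion, and then passes the limit through $Df(z)$ using that $f$ — and hence the pertinent difference-quotient and derivative maps on norm-bounded sets — is continuous in the strong operator topology. This produces $Df(z)[H]=0$ with $z\in\om$ and $H\neq 0$, contradicting the standing hypothesis and completing the proof.

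The main obstacle is precisely this last step. The closed unit ball of $B(\ch)$ is not compact in the strong operator topology, and a sequence witnessing the failure of boundedness below may well converge weakly — even strongly — to $0$, so one cannot naively extract a convergent subsequence with nonzero limit. The role of the dilation-theoretic construction is exactly to circumvent this: it must be arranged so that the limiting configuration is genuinely nontrivial while remaining inside a strong-operator-closed level $\om_k$, which is where the extra hypotheses on the exhaustion and on the strong continuity of $f$ are used. I also expect to need, as a preliminary established alongside the results of Section~\ref{sec: strongshift}, that the derivative of a strong NC function inherits an appropriate continuity in the strong operator topology on norm-bounded sets.
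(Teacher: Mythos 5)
Your overall strategy---argue by contradiction, invoke the shift-form compactness result of Section \ref{sec: strongshift}, and exploit SOT-closedness of the exhaustion levels together with SOT continuity of the derivative---is the right one and is essentially the paper's. But the decisive step is exactly the one you leave vague, and the version of it you describe cannot work. You propose to keep the base point fixed (your ampliated point $z$; the ampliation itself buys nothing here) and to extract from the directions a nonzero SOT limit $H$ with $Df(z)[H]=0$. The obstruction is that the witnesses $h_n$ to the failure of boundedness below may themselves converge to $0$ in SOT, so no nonzero SOT limit of the $h_n$ exists; and if you conjugate the $h_n$ by unitaries $v_n$ (which is what the shift-form construction does, pushing their mass into a fixed finite-dimensional corner), the NC structure only yields the identity $Df(v_n^*xv_n)[v_n^*h_nv_n]=v_n^*Df(x)[h_n]v_n$: the base point must be conjugated by the \emph{same} unitaries, so it moves. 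There is no identity relating $Df(z)[v_n^*h_nv_n]$ to $v_n^*Df(z)[h_n]v_n$ for a fixed $z$, so "passing the limit through $Df(z)$" at a fixed point is not available.

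The paper's proof embraces the moving base point: apply Lemma \ref{sotcompact}(ii) to the pairs $(x,h_n)$ to obtain unitaries $W_n$ and a limit $(x',h')$ with $h'\neq 0$; note $x'\in\om_k$ because $\om_k$ is unitarily invariant and SOT-closed; then pass to the limit in $Df(W_n^*xW_n)[W_n^*h_nW_n]=W_n^*Df(x)[h_n]W_n$, whose left side tends SOT to $Df(x')[h']$ (by the SOT continuity of the derivative established after Definition \ref{sncf}) and whose right side tends to $0$ in norm. The contradiction is with injectivity of $Df$ at the \emph{new} point $x'$---which is precisely why the hypothesis is injectivity at every point of $\om$ and why SOT-closedness of $\om_k$ is needed. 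As written, your plan omits the conjugation identity and aims the contradiction at the fixed point $z$, which is a genuine gap; replacing the fixed-point scheme by this moving-base-point scheme closes it. Your ampliation step then becomes superfluous, and in any case the block identity you want there follows more cleanly from Lemma \ref{hessianexists}(i) (the derivative map is itself NC, hence preserves countable direct sums) than from a uniform-remainder differentiation argument.
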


We note that Theorem \ref{strongbb} does not require the NC domain to be connected in any topology. On the other hand, injective strong NC functions on norm-connected domains are especially nice:

\begin{corollary} \label{strongbbcor}
Let $f:\om \subset B(\ch)^d \rightarrow B(\ch)^d$ be an injective strong NC function. If $\om$ is connected and there exists a point $a\in \om$ such that $Df(a)$ is surjective, then $Df(x)$ is invertible for every $x\in \om.$
\end{corollary}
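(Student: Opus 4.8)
The plan is to chain together Theorems \ref{strongbb} and \ref{bbthm}, after first upgrading the injectivity of $f$ to injectivity of every derivative map $Df(x)$. For \emph{Step 1} ($f$ injective $\Rightarrow Df(x)$ injective for all $x\in\om$), fix $x\in\om$ and suppose $Df(x)[h]=0$ for some $h\in B(\ch)^d$. Choose $k$ with $x\in\om_k$ and a unitary $u\colon\ch\to\ch^{(2)}$; since $\om_k$ is closed under two-fold direct sums and unitarily invariant (Definition \ref{ncd} and the remark following it), $u^{-1}\begin{bmatrix} x & 0 \\ 0 & x\end{bmatrix}u\in\om_k\subset\om$, and as $\om$ is norm-open, $u^{-1}\begin{bmatrix} x & th \\ 0 & x\end{bmatrix}u\in\om$ for all sufficiently small $t>0$. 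The derivative formula from the introduction then gives
\[
f\!\left(u^{-1}\begin{bmatrix} x & th \\ 0 & x\end{bmatrix}u\right)=u^{-1}\begin{bmatrix} f(x) & t\,Df(x)[h] \\ 0 & f(x)\end{bmatrix}u=f\!\left(u^{-1}\begin{bmatrix} x & 0 \\ 0 & x\end{bmatrix}u\right),
\]
exhibiting two points of $\om$ with the same $f$-image; since conjugation by the invertible $u$ is injective, these points coincide only if $h=0$, and injectivity of $f$ forces $h=0$.

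For \emph{Step 2} (bounded below everywhere): $f$ is a strong NC function and, by Step 1, $Df(x)$ is injective at every point, so Theorem \ref{strongbb} gives that $Df(x)$ is bounded below for every $x\in\om$. In particular $Df$ satisfies the NC bounded below property of Definition \ref{ncbbp} essentially for free: whenever $z=u^{-1}\bigl[\bigoplus x_n\bigr]u$ lies in $\om$, the operator $Df(z)$ is bounded below simply because $z\in\om$.

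For \emph{Step 3} (invoke Theorem \ref{bbthm}): by hypothesis $Df(a)$ is surjective, and by Step 2 it is bounded below; a bounded-below surjection of a Banach space onto itself is a bijection whose set-theoretic inverse is automatically bounded (the bound below $\|Tv\|\ge C\|v\|$ directly yields $\|T^{-1}w\|\le C^{-1}\|w\|$), so $Df(a)$ is invertible in $B(B(\ch)^d)$. Now all hypotheses of Theorem \ref{bbthm} are in place---$f$ is NC, $\om$ is connected, $Df$ has the NC bounded below property, and $Df(a)$ is invertible---so we conclude that $Df(x)$ is invertible for every $x\in\om$, as desired.

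The only delicate point is Step 1: one must verify that the triangular perturbation $u^{-1}\begin{bmatrix} x & th \\ 0 & x\end{bmatrix}u$ genuinely lies in the NC domain for small $t$, which rests squarely on the ``closed under direct sums within each exhaustion level'' axiom of Definition \ref{ncd} together with the norm-openness of $\om$, after which the derivative identity applies. The remaining steps are a routine combination of the two main theorems with the elementary functional-analytic fact about bounded-below surjections.
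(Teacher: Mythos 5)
Your proof is correct and takes essentially the same route as the paper: the paper simply cites Theorem \ref{injthm} for your Step 1 (your argument there is just the forward direction of that theorem's proof), then applies Theorem \ref{strongbb} and feeds the resulting everywhere-bounded-below derivative, together with the surjectivity of $Df(a)$, into Theorem \ref{bbthm}. Your added remarks---that bounded below plus surjective gives invertibility of $Df(a)$, and that the NC bounded below property holds trivially once $Df(x)$ is bounded below at every point of $\om$---are exactly the (implicit) glue in the paper's three-line proof.
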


Results such as Theorem \ref{strongbb} and Corollary \ref{strongbbcor} suggest it may be natural to have some structure in the strong operator topology  built into the definitions of NC domain and function. However, Theorems \ref{bbthm}, \ref{inverse}, and \ref{implicit}, along with the foundations found in Section \ref{sec: foundational}, require no such hypotheses. As such, there is merit to also studying a more general theory. Therefore, we maintain a distinction throughout this note.

See Section \ref{sec: strongshift} for more details on the precise definition of \emph{strong NC function} and the construction of what we call \emph{shift forms}. Reminiscent of noncommutative dilation theory, these shift forms have nice SOT convergence properties (Lemma \ref{sotcompact}) that are suited well for applications to strong NC functions.

  \section{FOUNDATIONAL PROPERTIES}\label{sec: foundational}

  The aim of this section is to collect basic properties and formulas for NC functions defined on operator domains. Our first lemma is an operatorial version of a fundamental formula for noncommutative functions. In \cite{HKM}, Helton, Klep, and McCullough proved a similar formula for matricial nc functions. In this and other related formulas to follow, the presence of unitaries or some invertible linear map $s$ in the statements is necessary as we need a way of identifying $\ch$ with some $\ch^{(l)}.$ Several results in this section have analogues in the classical matricial nc theory. However, we present precise statements and complete proofs here, adhering to the formalisms introduced in Section \ref{sec: prelim}. 

  \begin{lemma} \label{L}
  Let $f:\om \subset B(\ch)^d\rightarrow B(\ch)^r$ be an NC function and let $L\in B(\ch)$. If $x,y \in \om$ and $s:\ch \rightarrow \ch^{(2)}$ is any invertible linear map such that $$s^{-1}\begin{bmatrix}
  x & Ly-xL \\
  0 & y{}
  \end{bmatrix}s \in \om,$$ then $$f\left(s^{-1}\begin{bmatrix}
  x & Ly-xL \\
  0 & y
  \end{bmatrix}s\right)=s^{-1}\begin{bmatrix}
  f(x) & Lf(y)-f(x)L \\
  0 & f(y)
  \end{bmatrix}s.$$
  \end{lemma}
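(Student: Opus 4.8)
The plan is to deduce this from the defining direct-sum property of NC functions (Definition \ref{ncf}) by absorbing the off-diagonal term $Ly-xL$ into a similarity. The key observation is the elementary block identity
\[
\begin{bmatrix} x & Ly-xL \\ 0 & y \end{bmatrix} = R^{-1}\begin{bmatrix} x & 0 \\ 0 & y \end{bmatrix} R,\qquad R:=\begin{bmatrix} I & -L \\ 0 & I \end{bmatrix},
\]
where $R\in B(\ch^{(2)})$ is bounded and invertible with $R^{-1}=\begin{bmatrix} I & L \\ 0 & I \end{bmatrix}$ (since $L\in B(\ch)$); one checks this by a direct $2\times 2$ block multiplication, remembering that conjugation by $R$ acts component-wise on the $d$-tuple as in Section \ref{sec: prelim}.

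First I would set $\tilde s:=Rs:\ch\rightarrow\ch^{(2)}$, which is again bounded and invertible, being a composition of bounded invertible maps. Then
\[
\tilde s^{-1}\begin{bmatrix} x & 0 \\ 0 & y \end{bmatrix}\tilde s = s^{-1}R^{-1}\begin{bmatrix} x & 0 \\ 0 & y \end{bmatrix}Rs = s^{-1}\begin{bmatrix} x & Ly-xL \\ 0 & y \end{bmatrix}s,
\]
which lies in $\om$ precisely by the hypothesis of the lemma. Applying Definition \ref{ncf} to the pair $x,y$ with the bounded invertible map $\tilde s$ therefore gives
\[
f\!\left(s^{-1}\begin{bmatrix} x & Ly-xL \\ 0 & y \end{bmatrix}s\right) = \tilde s^{-1}\begin{bmatrix} f(x) & 0 \\ 0 & f(y) \end{bmatrix}\tilde s = s^{-1}R^{-1}\begin{bmatrix} f(x) & 0 \\ 0 & f(y) \end{bmatrix}R\,s.
\]

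To finish, I would invoke the same block identity once more, now applied coordinate-by-coordinate with $f^j(x)$ and $f^j(y)$ in place of $x$ and $y$, to obtain
\[
R^{-1}\begin{bmatrix} f(x) & 0 \\ 0 & f(y) \end{bmatrix}R = \begin{bmatrix} f(x) & Lf(y)-f(x)L \\ 0 & f(y) \end{bmatrix},
\]
where $Lf(y)-f(x)L$ denotes the $r$-tuple $\big(Lf^j(y)-f^j(x)L\big)_{j=1}^{r}$; substituting this into the previous display yields the asserted formula. I do not expect a serious obstacle: essentially all the content is the shear identity, and the only points deserving a word of care are that $\tilde s=Rs$ is an admissible map for Definition \ref{ncf} — it is, since that definition permits \emph{any} bounded invertible linear map $\ch\rightarrow\ch^{(2)}$, not merely unitaries — and that every matrix manipulation, including the meaning of $s^{-1}(\cdot)s$ and of $Lf(y)-f(x)L$, is carried out component-wise as prescribed in Section \ref{sec: prelim}.
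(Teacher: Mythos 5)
Your proposal is correct and is essentially the paper's own argument: your shear $\tilde s = Rs$ with $R=\begin{bmatrix} I & -L \\ 0 & I \end{bmatrix}$ is exactly the map $\sigma$ used in the paper, and the rest of the computation (applying Definition \ref{ncf} with $\tilde s$ and undoing the shear on the value side) coincides with the paper's proof. No gaps.
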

  \begin{proof}
    Define $\sigma:\ch\rightarrow \ch^{(2)}$ to be the invertible map $\sigma:=\begin{bmatrix}
  1 & -L \\
  0 & 1 
  \end{bmatrix}s.$ Then a computation shows $$\sigma^{-1} \begin{bmatrix}
  x & 0 \\
  0 & y 
  \end{bmatrix}\sigma=s^{-1}\begin{bmatrix}
  1 & L \\
  0 & 1 
  \end{bmatrix}\begin{bmatrix}
  x & 0 \\
  0 & y 
  \end{bmatrix}\begin{bmatrix}
  1 & -L \\
  0 & 1 
  \end{bmatrix}s = s^{-1}\begin{bmatrix}
  x & Ly-xL \\
  0 & y
  \end{bmatrix}s \in \om.$$ Since $f$ is NC, we have \begin{align*}
f\left(s^{-1}\begin{bmatrix}
  x & Ly-xL \\
  0 & y
  \end{bmatrix}s\right) &= f\left(\sigma^{-1}\begin{bmatrix}
  x & 0 \\
  0 & y 
  \end{bmatrix}\sigma\right) \\
  &= \sigma^{-1}\begin{bmatrix}
  f(x) & 0 \\
  0 & f(y) 
  \end{bmatrix}\sigma \\
  &= s^{-1}\begin{bmatrix}
  f(x) & Lf(y)-f(x)L \\
  0 & f(y)
  \end{bmatrix}s,
  \end{align*} which completes the proof.
  \end{proof}

  As noted previously, it immediately follows from Lemma \ref{L} that operator NC functions preserve intertwinings.

  Recall that if $X$ and $Y$ are Banach spaces and $U\subset X$ is open, then a function $g:U\rightarrow Y$ is said to be \emph{G\^{a}teaux differentiable} if for all $x\in U$ and all $h\in X$, the limit $$Dg(x)[h]:=\lim_{t\rightarrow 0} \frac{g(x+th)-g(x)}{t}$$ exists. It is a well-known general fact (see \cite{taylor}) that over complex scalars, a norm-continuous and G\^{a}teaux differentiable function is automatically \emph{Fr\'{e}chet} differentiable, and the two derivatives must then coincide. In particular,  $Dg(x):X\rightarrow Y$ is then a bounded linear map for each $x\in U$. 

  \begin{lemma} \label{difflemma}
  An NC function is norm-continuous and G\^{a}teaux differentiable, and therefore is Fr\'{e}chet differentiable.
  \end{lemma}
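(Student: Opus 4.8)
\textbf{Proof proposal for Lemma \ref{difflemma}.}

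The plan is to establish the two properties in order: first norm-continuity, then G\^{a}teaux differentiability, and then invoke the cited general fact about complex-scalar Gateaux-differentiable continuous maps being Fr\'{e}chet differentiable. The key tool throughout is Lemma \ref{L} together with the ``upper-triangular'' evaluation it provides, the same device used to derive the derivative formula advertised in the introduction. Concretely, for a point $x$ in an exhausting level $\om_k$ and a direction $h\in B(\ch)^d$, I want to evaluate $f$ on a $2\times 2$ block operator of the form $\begin{bmatrix} x & th \\ 0 & x \end{bmatrix}$ (after conjugating by a suitable invertible $s:\ch\rightarrow\ch^{(2)}$ to land back in $B(\ch)^d$), for small real or complex $t$. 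The crucial point is that such block tuples lie in $\om$ once $|t|$ is small enough: since $x\in\om_k\subset\inn\om_{k+1}$, a norm ball around $x$ sits inside $\om_{k+1}\subset\om$, and $\begin{bmatrix} x & th \\ 0 & x\end{bmatrix}$ is a norm-$O(|t|)$ perturbation of $\begin{bmatrix} x & 0 \\ 0 & x\end{bmatrix}$, which is unitarily equivalent to a point of $\om_{k+1}$ by the direct-sum closure property. So for $|t|$ small the block tuple is in $\om$ and Lemma \ref{L}-type reasoning (with $L$ chosen so that $Ly-xL$ realizes $th$; here $x=y$ and one uses the conjugation $\sigma$) applies.

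The second step is the actual differentiability computation. Applying the NC property to the conjugated block tuple yields
\[
f\!\left(s^{-1}\begin{bmatrix} x & th \\ 0 & x\end{bmatrix}s\right)=s^{-1}\begin{bmatrix} f(x) & \ast \\ 0 & f(x)\end{bmatrix}s,
\]
so the $(1,2)$ entry of the block operator $s\,f(\cdots)\,s^{-1}$, call it $g(t)$, is an off-diagonal entry of an operator tuple and hence is a bounded-operator-valued function of $t\in\C$ on a disk around $0$, with $g(0)=0$. One then shows $g$ is holomorphic in $t$: this follows because $f$ restricted to a norm ball is locally bounded (every $f(\om_k)$ is norm-bounded, as established after Definition \ref{ncf}), and the weak (entrywise) analyticity of $t\mapsto g(t)$ can be checked by pairing with functionals and using that noncommutative functions, being intertwining-preserving and locally bounded, behave well under the resolvent-type perturbations $t\mapsto \begin{bmatrix} x & th \\ 0 & x\end{bmatrix}$ — alternatively, one uses a difference-quotient argument directly. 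Holomorphy of $g$ at $0$ gives $g(t)=tA+o(t)$ for a bounded operator $A=g'(0)$, and unwinding the conjugation by $s$ identifies $\tfrac{1}{t}(f(x+th)-f(x))\to$ (the off-diagonal data reassembled), which is exactly the G\^{a}teaux derivative $Df(x)[h]$; linearity and boundedness of $h\mapsto Df(x)[h]$ come out of the construction. For norm-continuity of $f$ itself on each $\om_k$: using that $f(\om_{k+1})$ is norm-bounded and the same block trick with $h=y-x$ for nearby $x,y$, one bounds $\Vert f(y)-f(x)\Vert$ by the off-diagonal term, which is controlled by $\Vert y-x\Vert$ times the uniform bound on $f$ over the slightly larger level, giving local Lipschitz-type continuity. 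Finally, with norm-continuity and G\^{a}teaux differentiability in hand, the classical complex-analytic fact cited from \cite{taylor} upgrades this to Fr\'{e}chet differentiability with coinciding derivatives.

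I expect the main obstacle to be the rigorous justification that the off-diagonal entry $g(t)$ depends \emph{holomorphically} (or even just nicely enough to extract a linear term) on $t$ — that is, bridging from ``$f$ is merely a direct-sum-preserving function'' to ``$t\mapsto f$ of this one-parameter block family is analytic.'' The honest route is: (a) pin down that the block tuples $s^{-1}\begin{bmatrix} x & th\\ 0 & x\end{bmatrix}s$ stay in a \emph{fixed} level $\om_{k+1}$ for $|t|$ in a fixed small disk, so $f$ of them is uniformly norm-bounded; (b) show the map is locally bounded and separately ``analytic along lines'' in the sense needed, then invoke the vector-valued version of the fact that a locally bounded, weakly-analytic Banach-space-valued function is analytic. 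One must be careful that $s$ can be chosen \emph{independent of $t$} (a single unitary identification $\ch\cong\ch^{(2)}$ suffices, since $\om_{k+1}$ is unitarily invariant and closed under the relevant direct sum), so that the $t$-dependence lives entirely in the operator entries and not in the domain identification. Once that uniformity is secured, the difference-quotient limit and its linearity in $h$ are routine.
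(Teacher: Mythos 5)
Your continuity argument is essentially the paper's: amplify the off-diagonal entry $\lambda(y-x)$ (with $\lambda$ as large as the domain level permits), note the block point lies in $\om_{k+1}$, and use the uniform bound on $f(\om_{k+1})$ to force $\Vert f(y)-f(x)\Vert\lesssim \Vert y-x\Vert$. That half is fine.

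The differentiability half has a genuine gap, in two places. First, your block has \emph{equal} diagonal entries, $\begin{bmatrix} x & th\\ 0 & x\end{bmatrix}$, and you claim Lemma \ref{L} applies ``with $L$ chosen so that $Ly-xL$ realizes $th$; here $x=y$.'' With $x=y$ the off-diagonal that Lemma \ref{L} can produce is a simultaneous commutator $Lx^i-x^iL$ for $i=1,\dots,d$, and an arbitrary direction $th$ is not of this form (e.g.\ $h^i=1_\ch$: the identity is not a commutator in $B(\ch)$, and even when individual equations are solvable a simultaneous $L$ generally does not exist). So Lemma \ref{L} gives you no handle on this block, and the upper-triangular form $\begin{bmatrix} f(x) & g(t)\\ 0 & f(x)\end{bmatrix}$ has to come from intertwining with the inclusion/projection maps instead. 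Second, and more seriously, even granting that form and even granting holomorphy (or linearity) of $t\mapsto g(t)$, nothing in your argument connects $g(t)$ to the difference quotient $\frac{1}{t}\bigl(f(x+th)-f(x)\bigr)$; the ``unwinding the conjugation'' step is exactly the missing identification. The corner entry of $f$ at the equal-diagonal block is the statement of Proposition \ref{derprop}, i.e.\ it encodes $Df(x)[h]$ only \emph{after} differentiability is known, so taking it as the definition of $g$ and proving regularity of $g$ does not show $f$ is G\^ateaux differentiable at $x$. (Your proposed route via ``locally bounded $+$ weakly analytic $\Rightarrow$ analytic'' is also circular here, since analyticity of $t\mapsto f$ of the block family is not available before differentiability.)

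The paper closes this loop with a different block: diagonal entries $x+th$ and $x$ (unequal), off-diagonal $\eps h=\frac{\eps}{t}\bigl((x+th)-x\bigr)$. Because the off-diagonal is a \emph{scalar} multiple of the difference of the two diagonal entries, Lemma \ref{L} applies with $L=\frac{\eps}{t}1_\ch$, and the corner of $f$ at that block is literally the difference quotient $\frac{\eps}{t}\bigl(f(x+th)-f(x)\bigr)$. Since the block point converges in norm to $u^{-1}(x\oplus_{\eps h} x)u$ as $t\to 0$ and $f$ has already been shown norm-continuous, the corner converges, which is G\^ateaux differentiability; no holomorphy machinery is needed. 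Your concern about choosing $s$ (or $u$) independently of $t$ is legitimate and handled the same way in the paper, but the essential missing idea is this unequal-diagonal similarity trick linking block corners to difference quotients.
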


  \begin{proof}
We begin by showing that if $f:\om \subset B(\ch)^d\rightarrow B(\ch)^r$ is NC, then $f$ is norm-continuous. Fix $x \in \om$ and $\eps>0,$ and let $\{\om_k\}$ be an exhausting sequence for $\om$ as in Definition \ref{ncd}. Say $x\in \om_k,$ so there is $u:\ch \rightarrow \ch^{(2)}$ unitary such that $$z:=u^{-1}\begin{bmatrix}
x & 0 \\
0 & x
\end{bmatrix}u\in \om_k.$$ Then there is some $r>0$ such that the balls centered at $x$ and $z$ with radius $r$ are contained in $\om_{k+1}.$ By the discussion immediately following Definition \ref{ncf}, there is $M>0$ such that $\Vert f\Vert < M$ on $\om_{k+1}$.

Now, set $\delta:=\min\{\frac{r\eps}{2M}, r/2\}$ and let $\Vert y-x\Vert <\delta.$ Then \begin{align*}
\left \Vert u^{-1}\begin{bmatrix}
x & \frac{M}{\eps}(y-x) \\
0 & y
\end{bmatrix}u - z\right\Vert &= \left\Vert \begin{bmatrix}
0 & \frac{M}{\eps}(y-x) \\
0 & y-x
\end{bmatrix}\right\Vert \\
&\leq M/\eps \Vert y-x\Vert +\Vert y-x\Vert \\
&< r,
\end{align*} so we have, by Lemma \ref{L}, $$\left\Vert \begin{bmatrix}
f(x) & \frac{M}{\eps}(f(y)-f(x)) \\
0 & f(y)
\end{bmatrix}\right\Vert= \left\Vert f\left(u^{-1}\begin{bmatrix}
x & \frac{M}{\eps}(y-x) \\
0 & y
\end{bmatrix}u\right)\right\Vert<M.$$ It then follows that $\Vert f(y)-f(x)\Vert<\eps.$

Next, we show $f$ is G\^{a}teaux differentiable. Fix $x\in \om$ and $h\in B(\ch)^d.$ There is $k\geq 1,$ $u:\ch \rightarrow \ch^{(2)}$ unitary, and $\eps>0$ small so that $x\in \om_k$ and $$u^{-1}\begin{bmatrix}
x & \eps h \\
0 & x 
\end{bmatrix}u \in \om_k.$$ Then for all $t\neq 0$ with small enough modulus, $$\om_{k+1}\ni u^{-1}\begin{bmatrix}
x+th & \eps h \\
0 & x
\end{bmatrix}u = u^{-1}\begin{bmatrix}
x+th & \frac{\eps}{t}(x+th-x) \\
0 & x
\end{bmatrix}u,$$ so by Lemma \ref{L} again, \begin{align}\label{diffquo} f\left(u^{-1}\begin{bmatrix}
x+th & \eps h \\
0 & x
\end{bmatrix}u\right)=u^{-1}\begin{bmatrix}
f(x+th) & \frac{\eps}{t}(f(x+th)-f(x)) \\
0 & f(x)
\end{bmatrix}u.\end{align} By continuity of $f$, as $t\rightarrow 0$, the limit on the left-hand side of (\ref{diffquo}) exists, and therefore so does that of the 1-2 entry of the matrix on the right-hand side of (\ref{diffquo}), thus proving $f$ is G\^{a}teaux differentiable. Since $f$ is also continuous, the discussion immediately preceding this proof implies $f$ is Fr\'{e}chet differentiable.
\end{proof}

  Moreover, the second part of the above proof also provides the following derivative formula for operator NC functions. It is reminiscent of a formula obtained in \cite{HKM}, and will be an irreplaceable tool for us moving forward. 

  \begin{proposition} \label{derprop}
   Let $f:\om \subset B(\ch)^d\rightarrow B(\ch)^r$ be an NC function. Suppose $x\in \om$, $h\in B(\ch)^d,$ and $s:\ch \rightarrow \ch^{(2)}$ is any invertible linear map such that $$s^{-1}\begin{bmatrix}
   x & h \\
   0 & x
   \end{bmatrix}s \in \om.$$ Then, \begin{align} \label{derform} f\left(s^{-1}\begin{bmatrix}
   x & h \\
   0 & x
   \end{bmatrix}s\right)=s^{-1}\begin{bmatrix}
   f(x) & Df(x)[h] \\
   0 & f(x)
   \end{bmatrix}s.\end{align}   
  \end{proposition}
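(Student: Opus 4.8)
The plan is to rerun, with the given invertible map $s$ in place of a unitary, the differentiation argument from the second half of the proof of Lemma~\ref{difflemma}: the mechanism is that Lemma~\ref{L}, applied with $L$ a scalar multiple of the identity, turns the upper-right slot of a block upper-triangular tuple into a difference quotient.

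Concretely, I would set $z:=s^{-1}\begin{bmatrix} x & h \\ 0 & x \end{bmatrix}s$, which lies in $\om$ by hypothesis, and use that NC domains are open in the norm topology: since $x\in\om$ and $z\in\om$, there is $t_0>0$ so that for every $t\in\C$ with $0<|t|<t_0$ one has $x+th\in\om$ and
\[
  s^{-1}\begin{bmatrix} x & h \\ 0 & x+th \end{bmatrix}s \;=\; z + t\,s^{-1}\begin{bmatrix} 0 & 0 \\ 0 & h \end{bmatrix}s \;\in\;\om .
\]
Fixing such a $t$, I would apply Lemma~\ref{L} with the tuples $x$ and $x+th$ in the roles of $x$ and $y$, with $L=t^{-1}1_{\ch}\in B(\ch)$ (so that $L(x+th)-xL=h$), and with the given $s$; the displayed membership is exactly the hypothesis of that lemma, and its conclusion is
\[
  f\!\left(s^{-1}\begin{bmatrix} x & h \\ 0 & x+th \end{bmatrix}s\right)
  \;=\; s^{-1}\begin{bmatrix} f(x) & t^{-1}\bigl(f(x+th)-f(x)\bigr) \\ 0 & f(x+th) \end{bmatrix}s .
\]
Letting $t\to 0$ and invoking Lemma~\ref{difflemma} (norm-continuity of $f$ handles the left-hand side and the lower-right entry; G\^ateaux differentiability handles the upper-right entry, which tends to $Df(x)[h]$), and using that $T\mapsto s^{-1}Ts$ and the formation of a $2\times2$ block tuple are norm-continuous, I obtain $f(z)=s^{-1}\begin{bmatrix} f(x) & Df(x)[h] \\ 0 & f(x) \end{bmatrix}s$, which is (\ref{derform}).

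I do not expect a genuine obstacle here; the two points that need care are ensuring the perturbed tuple $s^{-1}\begin{bmatrix} x & h \\ 0 & x+th \end{bmatrix}s$ stays inside $\om$ (this is precisely where openness of the NC domain is used) and checking that Lemma~\ref{L} applies with an arbitrary invertible $s$ rather than a unitary, which is fine since that lemma is already stated at this level of generality. Should one prefer, the same conclusion can be reached by perturbing the top diagonal entry (with $L=-t^{-1}1_{\ch}$), or by combining the unitary case already implicit in the proof of Lemma~\ref{difflemma} with the fact that $f$ preserves intertwinings; the direct argument above seems to be the shortest.
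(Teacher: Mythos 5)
Your proof is correct and is essentially the paper's own argument: the paper obtains (\ref{derform}) by exactly the difference-quotient mechanism in the second half of the proof of Lemma \ref{difflemma}, i.e.\ perturbing one diagonal entry, applying Lemma \ref{L} with a scalar multiple of the identity, and letting $t\to 0$ using norm-continuity and G\^ateaux differentiability. Your only deviations (perturbing the lower rather than the upper diagonal entry, and writing the argument directly for the given invertible $s$ and the given $h$ instead of a unitary and $\eps h$) are cosmetic, and in fact spell out the routine extension the paper leaves implicit.
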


  A common scenario where we can apply Proposition \ref{derprop} is as follows. Suppose $x\in \om$ and $s=u$ is a given unitary. Then by closure under direct sums and unitary invariance, $u^{-1}(x\oplus x)u$ is an element of $\om$ (for the \emph{given} unitary $u$) and the conclusion of Proposition \ref{derprop} holds for all $h \in B(\ch)^d$ with sufficiently small norm.

  The next theorem is an operatorial analogue of J. E. Pascoe's inverse function theorem \cite{james} for matricial nc functions. It is a first step towards a bonafide inverse function theorem for operator NC functions. We remark that, in contrast to the finite dimensional case, it is possible for a linear map $B(\ch)^d\rightarrow B(\ch)^r$ to be injective even if $d>r.$  Therefore, this theorem has content even when $d\neq r,$ and so we state it in this generality.

\begin{theorem}\label{injthm}
An NC function $f:\om \subset B(\ch)^d \rightarrow B(\ch)^r$ is injective if and only if $Df(x):B(\ch)^d\rightarrow B(\ch)^r$ is injective for every $x\in \om.$ 
\end{theorem}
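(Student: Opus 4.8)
The plan is to prove both directions using the derivative formula of Proposition \ref{derprop} together with the direct-sum-preserving property from Definition \ref{ncf}. The implication that injectivity of every $Df(x)$ forces $f$ to be injective is the more substantial direction and follows Pascoe's strategy: if $f(x) = f(y)$ for $x, y \in \om$, we want to build a path (or a one-parameter family of matrix points) connecting $x$ and $y$ inside $\om$ after conjugating into $\ch^{(2)}$, and then show the derivative along this family vanishes, contradicting injectivity.

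\textbf{The easy direction.} Suppose $f$ is injective and fix $x \in \om$. I would show $Df(x)$ is injective by contradiction: suppose $Df(x)[h] = 0$ for some $h \neq 0$. Pick a unitary $u : \ch \to \ch^{(2)}$; by closure under direct sums and unitary invariance, $u^{-1}(x \oplus x)u \in \om$, and by the remark after Proposition \ref{derprop}, for all sufficiently small scalars $t$ the point $u^{-1}\left(\begin{bmatrix} x & th \\ 0 & x \end{bmatrix}\right)u$ lies in $\om$ and \[ f\left(u^{-1}\begin{bmatrix} x & th \\ 0 & x \end{bmatrix}u\right) = u^{-1}\begin{bmatrix} f(x) & t\,Df(x)[h] \\ 0 & f(x) \end{bmatrix}u = u^{-1}\begin{bmatrix} f(x) & 0 \\ 0 & f(x) \end{bmatrix}u = f\left(u^{-1}\begin{bmatrix} x & 0 \\ 0 & x \end{bmatrix}u\right). \] Since $f$ is injective, the two arguments must be equal, which forces $th = 0$, hence $h = 0$, a contradiction. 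So $Df(x)$ is injective for every $x \in \om$.

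\textbf{The hard direction.} Now assume $Df(x)$ is injective for every $x \in \om$ and suppose $f(x) = f(y)$ with $x, y \in \om$; we must show $x = y$. Fix an exhausting level $\om_k$ containing both $x$ and $y$, and a unitary $u : \ch \to \ch^{(2)}$ so that $z := u^{-1}(x \oplus y)u \in \om_k$. Apply Lemma \ref{L} with a general $L \in B(\ch)$: whenever $u^{-1}\begin{bmatrix} x & Ly - xL \\ 0 & y \end{bmatrix}u \in \om$, one has \[ f\left(u^{-1}\begin{bmatrix} x & Ly - xL \\ 0 & y \end{bmatrix}u\right) = u^{-1}\begin{bmatrix} f(x) & Lf(y) - f(x)L \\ 0 & f(y) \end{bmatrix}u = u^{-1}\begin{bmatrix} f(x) & Lf(x) - f(x)L \\ 0 & f(x) \end{bmatrix}u, \] using $f(x) = f(y)$; in particular the $1$-$2$ entry $Lf(x) - f(x)L$ is fixed but need not vanish, so a direct single-shot argument as in the easy direction does not immediately close. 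Instead I would consider the scaled family: for small $t$, the point $u^{-1}\begin{bmatrix} x & t(Ly - xL) \\ 0 & y \end{bmatrix}u$ lies in $\om$, and comparing with Proposition \ref{derprop}'s formula for $f$ applied to $u^{-1}\begin{bmatrix} x & h_t \\ 0 & y \end{bmatrix}u$ is awkward because the diagonal entries are $x$ and $y$, not equal; one needs the off-diagonal perturbation formula for \emph{distinct} diagonal entries, which is exactly Lemma \ref{L} read as a statement about the "difference quotient in the intertwining direction." The cleanest route is: set $g(t) = f\left(u^{-1}\begin{bmatrix} x & t(Ly-xL) \\ 0 & y \end{bmatrix}u\right)$ for $t$ near $0$; by Lemma \ref{L} this equals $u^{-1}\begin{bmatrix} f(x) & t(Lf(y)-f(x)L) \\ 0 & f(y) \end{bmatrix}u$, an affine function of $t$, so its derivative at $t=0$ is $u^{-1}\begin{bmatrix} 0 & Lf(y)-f(x)L \\ 0 & 0 \end{bmatrix}u$. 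On the other hand, by the chain rule this derivative equals $Df(z)\big[u^{-1}(0 \oplus_{\mathrm{off}} (Ly - xL))u\big]$ where the middle argument is the tuple with the indicated strictly-upper-triangular shape. Using $f(x) = f(y)$ the affine term is $u^{-1}\begin{bmatrix} 0 & Lf(x) - f(x)L \\ 0 & 0 \end{bmatrix}u$. I would now argue that if $x \neq y$ one can choose $L$ with $Ly - xL \neq 0$ but $Lf(x) - f(x)L = 0$: indeed, take $L$ to be a rank-one operator $L = \eta \otimes \xi^*$; then $Ly - xL = 0$ for all $L$ iff $x, y$ act as the same scalar on all vectors, i.e. $x = y$; and one has enough freedom in choosing $\xi, \eta$ to kill $Lf(x) - f(x)L$ while keeping $Ly - xL \neq 0$ — this is where the detailed operator-theoretic argument lives, and it is the main obstacle. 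Once such an $L$ is found, $Df(z)$ annihilates a nonzero tuple, contradicting injectivity of $Df(z)$, so $x = y$.

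\textbf{Expected main obstacle.} The delicate point is the linear-algebra/operator-theory lemma producing an $L \in B(\ch)$ with $Ly - xL \neq 0$ yet $Lf(x) - f(x)L = 0$ (equivalently, choosing $L$ in the commutant of $f(x)$ but not in the "commutant-like" set forcing $xL = Ly$); one has to be careful that $f(x)$ need not be scalar, so its commutant could be large or small, and that $x \neq y$ as tuples must be leveraged componentwise. A robust way to handle this is to first reduce, via the intertwining-preservation already recorded, to the situation where one looks at $L$ ranging over rank-one operators and observes that the map $L \mapsto Ly - xL$ is injective on rank-ones exactly when $x = y$ (coordinatewise), while the constraint $L \mapsto Lf(x) - f(x)L = 0$ cuts out a subspace that cannot contain all the relevant rank-ones unless, again, it forces $x = y$; making this dichotomy precise, possibly by passing to matrix entries of $x - y$ against a suitable vector, is the technical heart of the proof. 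Everything else — continuity, Fréchet differentiability, the derivative formula — is already in hand from Lemmas \ref{L}, \ref{difflemma} and Proposition \ref{derprop}.
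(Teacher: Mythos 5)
Your easy direction is exactly the paper's argument and is fine. Your hard direction sets up the right framework (apply derivative injectivity at the direct-sum point $z=u^{-1}(x\oplus y)u$, in a strictly upper-triangular direction produced by Lemma \ref{L}), but it stalls precisely at the step you yourself flag as the main obstacle, and the repair you sketch would not work. You ask for $L\in B(\ch)$ with $Ly-xL\neq 0$ but $Lf(x)-f(x)L=0$ and propose to hunt for it among rank-one operators $\eta\otimes\xi^*$. On an infinite dimensional $\ch$ there need not exist \emph{any} nonzero rank-one operator commuting with all components of $f(x)$ (that would require common eigenvectors of the $f(x)^j$ and of their adjoints), so the dichotomy you hope to exploit is simply unavailable in general; as written, the proof is incomplete at its pivotal step and the suggested route is a dead end.

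The missing observation is that the trivial choice $L=\eps 1_{\ch}$ already does everything: then $Lf(y)-f(x)L=\eps\bigl(f(y)-f(x)\bigr)=0$ because $f(x)=f(y)$, while $Ly-xL=\eps(y-x)$ is nonzero exactly when $x\neq y$. Feeding this into your own computation, Lemma \ref{L} makes the map
\[
t\;\longmapsto\; f\left(u^{-1}\begin{bmatrix} x & t\,\eps(y-x) \\ 0 & y \end{bmatrix}u\right)
= u^{-1}\begin{bmatrix} f(x) & t\,\eps\bigl(f(y)-f(x)\bigr) \\ 0 & f(y) \end{bmatrix}u
\]
constant in $t$, so its derivative at $t=0$, namely $Df(z)\bigl[u^{-1}\begin{bmatrix} 0 & \eps(y-x) \\ 0 & 0\end{bmatrix}u\bigr]$, vanishes; since this direction is nonzero when $x\neq y$, injectivity of $Df(z)$ forces $x=y$. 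With that choice made, your argument is in substance the paper's proof: the paper exhibits the same cancellation by writing the augmented point (with both diagonal blocks equal to $v^{-1}(x\oplus y)v$ and corner $v^{-1}\begin{bmatrix}0&\eps(x-y)\\0&0\end{bmatrix}v$) as an explicit $4\times 4$ similarity of $\operatorname{diag}(x,y,x,y)$ via a non-unitary $s$ containing an $\eps$ in the corner, computing $f$ of it two ways, and comparing with Proposition \ref{derprop}; your $t$-derivative of the Lemma \ref{L} identity is an equivalent packaging. So the architecture is right, but the proposal has a genuine gap at the decisive step, and its proposed resolution (rank-one $L$ in the commutant of $f(x)$) would fail where the scalar choice succeeds.
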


\begin{proof}
Suppose first $f$ is injective and let $x\in \om$. Assume that $Df(x)[h]=0$. There is $u$ unitary and $\eps>0$ small enough so that $u^{-1}\begin{bmatrix}
x & \eps h \\
0 & x
\end{bmatrix}u \in \om$. Formula (\ref{derform}) then yields \begin{align*}
f\left(u^{-1}\begin{bmatrix}
x & \eps h \\
0 & x
\end{bmatrix}u\right) &= u^{-1}\begin{bmatrix}
   f(x) & Df(x)[\eps h] \\
   0 & f(x)
   \end{bmatrix}u \\
   &= f\left(u^{-1}\begin{bmatrix}
   x & 0 \\
   0 & x 
   \end{bmatrix}u\right).
\end{align*} By injectivity of $f,$ it must hold that $$u^{-1}\begin{bmatrix}
x & \eps h \\
0 & x
\end{bmatrix}u=u^{-1}\begin{bmatrix}
x & 0 \\
0 & x
\end{bmatrix}u,$$ which implies $h=0.$ Thus, $Df(x)$ has trivial kernel.

To prove the converse, suppose $x,y \in \om$ and $f(x)=f(y).$ There are unitaries $u,v: \ch \rightarrow \ch^{(2)}$ and $\eps>0$ such that $v^{-1}\begin{bmatrix}
x & 0 \\
0 & y 
\end{bmatrix}v \in \om$ and $$z:=u^{-1}\begin{bmatrix}
v^{-1}\begin{bmatrix}
x & 0 \\
0 & y 
\end{bmatrix}v & v^{-1}\begin{bmatrix}
0 & \eps(x-y) \\
0 & 0 
\end{bmatrix}v \\
0 & v^{-1}\begin{bmatrix}
x & 0 \\
0 & y 
\end{bmatrix}v
\end{bmatrix}u \in \om.$$ First, by Proposition \ref{derprop}, and because $f$ preserves direct sums, we know \begin{align}\label{bigmatrix}
f(z)= u^{-1}\begin{bmatrix}
v^{-1}\begin{bmatrix}
f(x) & 0 \\
0 & f(y) 
\end{bmatrix}v & Df\left(v^{-1}\begin{bmatrix}
x & 0 \\
0 & y 
\end{bmatrix}v\right)\left[v^{-1}\begin{bmatrix}
0 & \eps(x-y) \\
0 & 0 
\end{bmatrix}v\right] \\
0 & v^{-1}\begin{bmatrix}
f(x) & 0 \\
0 & f(y) 
\end{bmatrix}v
\end{bmatrix}u.
\end{align} 

On the other hand, a calculation shows that if we define $w:\ch \rightarrow \ch^{(4)}$ by $w:=(v\oplus v)u$ and $s:\ch \rightarrow \ch^{(4)}$ by $$s:=\begin{bmatrix}
1 & 0 & 0 & \eps 1 \\
0 & 1 & 0 & 0 \\
0 & 0 & 1 & 0 \\
0 & 0 & 0 & 1
\end{bmatrix}w,$$ then $z$ may be rewritten as $$z=s^{-1}\begin{bmatrix}
x & 0 & 0 & 0 \\
0 & y & 0 & 0 \\
0 & 0 & x & 0 \\
0 & 0 & 0 & y
\end{bmatrix}s.$$  Therefore, as $f$ is NC, we have \begin{align*}
f(z)&= f\left(s^{-1}\begin{bmatrix}
x & 0 & 0 & 0 \\
0 & y & 0 & 0 \\
0 & 0 & x & 0 \\
0 & 0 & 0 & y
\end{bmatrix}s\right) \\
&= s^{-1}\begin{bmatrix}
f(x) & 0 & 0 & 0 \\
0 & f(y) & 0 & 0 \\
0 & 0 & f(x) & 0 \\
0 & 0 & 0 & f(y)
\end{bmatrix}s \\
&= w^{-1}\begin{bmatrix}
f(x) & 0 & 0 & \eps(f(x)-f(y)) \\
0 & f(y) & 0 & 0 \\
0 & 0 & f(x) & 0 \\
0 & 0 & 0 & f(y)
\end{bmatrix}w \\
&= w^{-1}\begin{bmatrix}
f(x) & 0 & 0 & 0 \\
0 & f(y) & 0 & 0 \\
0 & 0 & f(x) & 0 \\
0 & 0 & 0 & f(y)
\end{bmatrix}w \\
&= u^{-1} \begin{bmatrix}
v^{-1}\begin{bmatrix}
f(x) & 0 \\
0 & f(y)
\end{bmatrix}v & 0 \\
0 & v^{-1}\begin{bmatrix}
f(x) & 0 \\
0 & f(y)
\end{bmatrix}v
\end{bmatrix}u.
\end{align*} Comparing this to equation (\ref{bigmatrix}) implies \[Df\left(v^{-1}\begin{bmatrix}
x & 0 \\
0 & y 
\end{bmatrix}v\right)\left[v^{-1}\begin{bmatrix}
0 & \eps(x-y) \\
0 & 0 
\end{bmatrix}v\right]=0\] in $B(\ch)^r.$ By the assumption of the derivative being injective at all points, \[v^{-1}\begin{bmatrix}
0 & \eps(x-y) \\
0 & 0 
\end{bmatrix}v=0,\] and we conclude $x=y$ as desired.
\end{proof}

Other results on this type of  "lack of dimensionality" were observed by Cushing, Pascoe, and Tully-Doyle in \cite{cushingpascoe}.  Theorem \ref{injthm} already provides a stark contrast between classical function theory and the noncommutative theory; examples abound of functions with globally invertible derivative who fail to be injective. 

We now recall the definition of the Hessian of a G\^{a}teaux differentiable function and later prove an analogous formula to Proposition \ref{derprop} for the Hessian of an NC operator function. The formula is of similar flavor to one derived by Agler and \mc Carthy in \cite{amif16} for matricial nc functions.

  \begin{definition}
  Let $X$ and $Y$ be Banach spaces and $U\subset X$ be open. For a G\^{a}teaux differentiable function $g:U\rightarrow Y$, we define the \emph{Hessian} of $g$ at the point $x\in U$ to be \begin{align}\label{hessquo} Hg(x)[h,k]:=\lim_{t\rightarrow 0}\frac{Dg(x+tk)[h]-Dg(x)[h]}{t},\end{align} whenever the limit exists for all $h,k \in B(\ch)^d$.
  \end{definition}

  In the next lemma, we show that the derivative of an operator NC function is itself NC, that the Hessian exists for NC functions, and that the Hessian is again NC. As an application of these facts, we give a simple, calculus-based proof using boundedness of the Hessian that an operator NC function must, in particular, be of class $C^1.$ 

  \begin{lemma}\label{hessianexists}
Suppose $f:\om \subset B(\ch)^d \rightarrow B(\ch)^r$ is an NC function. 

\begin{enumerate}
\item The derivative map $\phi:\om \times B(\ch)^d \rightarrow B(\ch)^r$ given by $$\phi(x,h):=Df(x)[h]$$ is an NC function.

\item The Hessian $Hf(x)$ exists at all $x\in \om$ and the map $\om \times B(\ch)^{2d} \rightarrow B(\ch)^r$ given by $(x,h,k)\mapsto Hf(x)[h,k]$ is an NC function. Furthermore, $$Hf(x)[h,k]=D\phi(x,h)[k,0].$$ 

\item $f$ is $C^1.$
\end{enumerate}
  \end{lemma}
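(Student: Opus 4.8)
The plan is to treat the three parts in sequence, with (i) carrying essentially all the content. First note that $\om\times B(\ch)^d$ is itself an NC domain, exhausted by $\om_k\times\{h\in B(\ch)^d:\|h\|\le k\}$: unitary invariance acts diagonally, and a unitary implementing a countable direct sum on the $\om_k$-coordinate automatically works on the ball-coordinate since $u^{-1}(\bigoplus\,\cdot\,)u$ preserves the supremum norm. Since being NC means preserving binary direct sums, it suffices, given $(x,h),(y,k)\in\om\times B(\ch)^d$ and an invertible $s:\ch\to\ch^{(2)}$ with $X:=s^{-1}(x\oplus y)s\in\om$ and $H:=s^{-1}(h\oplus k)s$, to prove $Df(X)[H]=s^{-1}\big(Df(x)[h]\oplus Df(y)[k]\big)s$. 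The engine is the derivative formula (\ref{derform}), which realizes $Df(\cdot)[\cdot]$ as the $(1,2)$-block of $f$ evaluated on $\begin{bmatrix}\cdot&\cdot\\0&\cdot\end{bmatrix}$. Fix unitaries $v,w_1,w_2:\ch\to\ch^{(2)}$; Proposition \ref{derprop} applies at $X$, at $x$, and at $y$, so for all small $\eps>0$ the elements $v^{-1}\begin{bmatrix}X&\eps H\\0&X\end{bmatrix}v$, $a:=w_1^{-1}\begin{bmatrix}x&\eps h\\0&x\end{bmatrix}w_1$, $b:=w_2^{-1}\begin{bmatrix}y&\eps k\\0&y\end{bmatrix}w_2$ lie in $\om$ and are evaluated by (\ref{derform}). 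The one nontrivial ingredient is that a single permutation unitary $P_0$ on $\ch^{(4)}$ (interchanging the two middle summands) satisfies $\begin{bmatrix}p\oplus q&\eps(g\oplus \ell)\\0&p\oplus q\end{bmatrix}=P_0^{-1}\big(\begin{bmatrix}p&\eps g\\0&p\end{bmatrix}\oplus\begin{bmatrix}q&\eps \ell\\0&q\end{bmatrix}\big)P_0$, used once with $(p,q,g,\ell)=(x,y,h,k)$ and once with $(f(x),f(y),Df(x)[h],Df(y)[k])$. Substituting the definitions of $X,H,a,b$ and using that $f$ preserves the binary direct sum $a\oplus b$ (hence also $f(X)=s^{-1}(f(x)\oplus f(y))s$), one obtains an invertible $\rho:\ch\to\ch^{(2)}$, built so that the chain of conjugations telescopes, with $v^{-1}\begin{bmatrix}X&\eps H\\0&X\end{bmatrix}v=\rho^{-1}(a\oplus b)\rho$ and, after applying $f$ and expanding via (\ref{derform}), $\begin{bmatrix}f(X)&\eps Df(X)[H]\\0&f(X)\end{bmatrix}=\begin{bmatrix}f(X)&\eps\,s^{-1}(Df(x)[h]\oplus Df(y)[k])s\\0&f(X)\end{bmatrix}$. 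Reading off the $(1,2)$-entry and cancelling $\eps$ gives the claim.

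For (ii), part (i) and Lemma \ref{difflemma} make $\phi$ Fr\'echet, hence G\^ateaux, differentiable on $\om\times B(\ch)^d$; write $D\phi(x,h)[k,\ell]$ for its derivative at $(x,h)$ in direction $(k,\ell)$. The difference quotient defining $Hf(x)[h,k]$ equals $t^{-1}\big(\phi((x,h)+t(k,0))-\phi(x,h)\big)$, whose limit as $t\to 0$ exists and is $D\phi(x,h)[k,0]$; thus $Hf(x)$ exists at every $x\in\om$ and $Hf(x)[h,k]=D\phi(x,h)[k,0]$. Applying part (i) to the NC function $\phi$ shows that $((x,h),(k,\ell))\mapsto D\phi(x,h)[k,\ell]$ is NC on $(\om\times B(\ch)^d)\times B(\ch)^{2d}$; restricting to the linear slice $\ell=0$, which is closed under direct sums and unitary conjugation, preserves NC-ness, so $(x,h,k)\mapsto Hf(x)[h,k]$ is an NC function on the NC domain $\om\times B(\ch)^{2d}$.

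For (iii), it remains to show that $Df:\om\to B(B(\ch)^d,B(\ch)^r)$ is norm-continuous. Since the Hessian map is NC it is locally bounded (as observed after Definition \ref{ncf}), so on each exhausting level $\om_j$ there is a constant $C_j$ with $\|Hf(z)[h,k]\|\le C_j\|h\|\,\|k\|$ for $z\in\om_j$ and all $h,k$ (by bilinearity and homogeneity in $(h,k)$). Given $x_0\in\om$, choose $j$ so that $x_0$ lies in the interior of $\om_j$ and $r>0$ with the norm-ball $B(x_0,r)\subset\om_j$; for $x\in B(x_0,r)$ the segment $[x_0,x]$ stays in $\om_j$, and $t\mapsto Df(x_0+t(x-x_0))[h]=\phi\big((x_0,h)+t(x-x_0,0)\big)$ is $C^1$ on $[0,1]$ with derivative $Hf(x_0+t(x-x_0))[h,x-x_0]$, using part (ii) together with continuity of the Hessian. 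The fundamental theorem of calculus then gives $Df(x)[h]-Df(x_0)[h]=\int_0^1 Hf(x_0+t(x-x_0))[h,x-x_0]\,dt$, whence $\|Df(x)[h]-Df(x_0)[h]\|\le C_j\|x-x_0\|\,\|h\|$; taking the supremum over $\|h\|\le 1$ yields $\|Df(x)-Df(x_0)\|\le C_j\|x-x_0\|\to 0$ as $x\to x_0$, so $Df$ is continuous and $f\in C^1$.

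The serious work is entirely in part (i): the statement is the familiar fact that the derivative of a noncommutative function is noncommutative, but over operators every identification of $\ch$ with $\ch^{(2)}$ or $\ch^{(4)}$ must be implemented by an explicit invertible map, and the delicate point is tracking the conjugators $s,v,w_1,w_2,P_0,\rho$ and choosing $\rho$ so that all intermediate conjugations cancel and only $v^{-1}(\cdot)v$ survives on both sides. Parts (ii) and (iii) are then formal, using only part (i), Lemma \ref{difflemma}, and the standard fact that a norm-continuous G\^ateaux-differentiable map of complex Banach spaces is $C^1$ as soon as its derivative is shown to be norm-continuous --- which the Hessian bound supplies.
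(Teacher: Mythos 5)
Your argument is correct, and parts (ii) and (iii) follow the paper's own route almost verbatim (Hessian $=D\phi(x,h)[k,0]$ via Lemma \ref{difflemma} applied to $\phi$, then local boundedness of the NC Hessian plus the fundamental theorem of calculus to get the Lipschitz estimate $\Vert Df(y)-Df(x)\Vert\le M\Vert y-x\Vert$). Where you genuinely diverge is part (i): the paper proves that $\phi$ is NC by a direct difference-quotient computation --- since $X+tH=s^{-1}\bigl((x+th)\oplus(y+tk)\bigr)s$ lies in $\om$ for small $t$, one applies direct-sum preservation of $f$ inside the Gâteaux limit and pulls $s^{-1}(\cdot)s$ through the limit, a two-line calculation needing no derivative formula at all. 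You instead reuse Proposition \ref{derprop}, reading $Df$ off the $(1,2)$-block and transporting the $4\times 4$ picture with a permutation unitary, in the spirit of the paper's proof of Theorem \ref{injthm}; this works --- the telescoping conjugator you allude to can be written explicitly as $\rho=(w_1\oplus w_2)^{-1}P_0(s\oplus s)v:\ch\to\ch^{(2)}$, the three smallness conditions on $\eps$ (at $X$, at $x$, at $y$) can be met simultaneously, and comparing $(1,2)$-entries after using $f(X)=s^{-1}(f(x)\oplus f(y))s$ gives exactly the claimed identity --- but it costs you the bookkeeping of five conjugators and an appeal to the remark after Proposition \ref{derprop} about given unitaries, whereas the paper's limit argument is shorter and makes clear that NC-ness of the derivative needs nothing beyond the definition of the Gâteaux quotient. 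In a final write-up you should either display $\rho$ explicitly or switch to the difference-quotient computation; as stated, ``built so that the chain of conjugations telescopes'' is the one place a referee would ask you to fill in.
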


  \begin{proof}
(i) Let $\{\om_k\}$ be an exhaustion of $\om$ as in the definition of NC domain. A natural candidate for an NC exhausting sequence for $\om \times B(\ch)^d$ is $$W_k:=\om_k \times \{h\in B(\ch)^d : \Vert h\Vert \leq k\}.$$ Indeed, the requirements of Definition \ref{ncd} are readily seen, so $\om \times B(\ch)^d$ is an NC domain. 

We now show $\phi$ is an NC function. This is a simple matter of using the definition of the derivative. Let $(x_1,h_1)$ and $(x_2,h_2)$ be in $\om \times B(\ch)^d$ and let $s:\ch \rightarrow \ch^{(2)}$ be invertible such that $$(X,H):=s^{-1}\begin{bmatrix}
(x_1,h_1) & 0 \\
0 & (x_2,h_2)  
  \end{bmatrix}s \in \om \times B(\ch)^d.$$ Since $f$ is NC,\begin{align*} 
  \phi(X,H) &=  \lim_{t\rightarrow 0} \frac{1}{t}\left\{f\left(s^{-1}\begin{bmatrix}
x_1 & 0 \\
0 & x_2  
  \end{bmatrix}s +ts^{-1}\begin{bmatrix}
h_1 & 0 \\
0 & h_2  
  \end{bmatrix}s\right)-f\left(s^{-1}\begin{bmatrix}
x_1 & 0 \\
0 & x_2  
  \end{bmatrix}s\right)\right\} \\
  &= \lim_{t\rightarrow 0} s^{-1}\begin{bmatrix}
\frac{f(x_1+th_1)-f(x_1)}{t} & 0 \\
0 & \frac{f(x_2+th_2)-f(x_2)}{t} 
  \end{bmatrix}s \\
  &= s^{-1}\begin{bmatrix}
\phi(x_1,h_1) & 0 \\
0 & \phi(x_2,h_2)  
  \end{bmatrix}s,
  \end{align*} which proves part (i).

(ii) Since $\phi$ is NC on its domain, we apply Lemma \ref{difflemma} to conclude $\phi$ is G\^{a}teaux differentiable. Unraveling the definitions therefore shows that the Hessian $Hf(x)$ exists for all $x \in \om,$ and the equality $Hf(x)[h,k]=D\phi(x,h)[k,0]$ must hold. Applying the result in part (i) to the NC function $\phi$ shows the map $\om \times B(\ch)^{3d}\rightarrow B(\ch)^r$ given by $(x,h,k,k')\mapsto D\phi(x,h)[k,k']$ is NC. Therefore, the Hessian map $(x,h,k)\mapsto Hf(x)[h,k]=D\phi(x,h)[k,0]$ must also be NC on $\om \times B(\ch)^{2d}$.

(iii) By part (ii), it in particular holds that for every $x\in \om$, there is a norm ball $B$ about $x$ and $M>0$ such that $\Vert Hf(y)[h,k] \Vert \leq M \Vert h\Vert \Vert k\Vert$ for all $y \in B$ and all $h,k \in B(\ch)^d.$ 

Fix $x\in \om$. Choose a ball $B$ about $x$ and $M>0$ as above. Then for $y\in B$ and $h\in B(\ch)^d,$ the map $t \mapsto Hf(x+t(y-x))[h,y-x]$ is continuous on the interval $[0,1]$ by part (ii), so we may estimate  \begin{align*}
\Vert Df(y)[h]-Df(x)[h]\Vert &= \left\Vert \int_{0}^1 \frac{d}{dt} Df(x+t(y-x))[h]dt\right\Vert \\
&= \left\Vert \int_{0}^1 Hf(x+t(y-x))[h,y-x]dt\right\Vert \\
&\leq  \int_0^1 \Vert Hf(x+t(y-x))[h,y-x]\Vert dt \\
&\leq  M\Vert h\Vert \Vert y-x\Vert.
  \end{align*} By definition of the operator norm, it then holds that $$\Vert Df(y)-Df(x)\Vert \leq M\Vert y-x\Vert$$ for $y\in B.$
  \end{proof}
 
Finally, we have the aforementioned formula for the Hessian of operatorial NC functions:

  \begin{proposition}\label{hessprop}
  Let $f:\om \subset B(\ch)^d \rightarrow B(\ch)^r$ be NC. Suppose $x\in \om$ and $u,v:\ch\rightarrow \ch^{(2)}$ are unitaries. Then for all $h,k\in B(\ch)^d$ of sufficiently small norm, \begin{align}\begin{split} \label{hess} 
   &f\left(v^{-1}\begin{bmatrix}
u^{-1}\begin{bmatrix}
x & k \\
0 & x
\end{bmatrix}u & u^{-1}\begin{bmatrix}
h & 0 \\
0 & h
\end{bmatrix}u \\
0 & u^{-1}\begin{bmatrix}
x & k \\
0 & x
\end{bmatrix}u
  \end{bmatrix}v\right)  \\
  &= w^{-1}\begin{bmatrix}
f(x) & Df(x)[k] & Df(x)[h] & Hf(x)[h,k] \\
0 & f(x) & 0 & Df(x)[h] \\
0 & 0 & f(x) & Df(x)[k] \\
0 & 0 & 0 & f(x)
  \end{bmatrix}w,\end{split}\end{align} where we set $w:=(u\oplus u)v$.
  \end{proposition}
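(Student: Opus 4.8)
The plan is to obtain \eqref{hess} by iterating the derivative formula of Proposition \ref{derprop} and then reassembling the resulting blocks. Write $X := u^{-1}\begin{bmatrix} x & k \\ 0 & x\end{bmatrix}u$ and $H := u^{-1}\begin{bmatrix} h & 0 \\ 0 & h\end{bmatrix}u$. For $k$ of sufficiently small norm one has $X\in\om$: closure under direct sums and unitary invariance put $u^{-1}(x\oplus x)u$ in $\om$, and $X$ is a small-norm perturbation of this point. For $h$ small as well, the point $v^{-1}\begin{bmatrix} X & H \\ 0 & X\end{bmatrix}v$ — which is the argument of $f$ on the left-hand side of \eqref{hess} — likewise lies in $\om$, so that side is defined.

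First I would apply Proposition \ref{derprop} to $f$ at $x$, with the unitary $u$ and increment $k$, to get $f(X) = u^{-1}\begin{bmatrix} f(x) & Df(x)[k] \\ 0 & f(x)\end{bmatrix}u$. Next, applying Proposition \ref{derprop} to $f$ at the point $X$, with the unitary $v$ and increment $H$, shows that the left-hand side of \eqref{hess} equals $v^{-1}\begin{bmatrix} f(X) & Df(X)[H] \\ 0 & f(X)\end{bmatrix}v$. It then remains only to compute $Df(X)[H]$. For this I invoke Lemma \ref{hessianexists}: the derivative map $\phi(y,\eta) := Df(y)[\eta]$ is NC on the NC domain $\om\times B(\ch)^d$, and $Hf(x)[h,k] = D\phi(x,h)[k,0]$. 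Observing that $(X,H) = u^{-1}\begin{bmatrix} (x,h) & (k,0) \\ 0 & (x,h)\end{bmatrix}u$, and that $X\in\om$ for $k$ small so this point lies in $\om\times B(\ch)^d$, Proposition \ref{derprop} applied to the NC function $\phi$ at $(x,h)$, with the unitary $u$ and increment $(k,0)$, yields
\[ Df(X)[H] = \phi(X,H) = u^{-1}\begin{bmatrix} Df(x)[h] & Hf(x)[h,k] \\ 0 & Df(x)[h]\end{bmatrix}u. \]

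Substituting the expressions for $f(X)$ and $Df(X)[H]$ into $v^{-1}\begin{bmatrix} f(X) & Df(X)[H] \\ 0 & f(X)\end{bmatrix}v$ and using $(u\oplus u)v = w$ together with associativity of conjugation, $v^{-1}(u\oplus u)^{-1}(\,\cdot\,)(u\oplus u)v = w^{-1}(\,\cdot\,)w$, the block operator on $\ch^{(4)}$ that appears is precisely the $4\times 4$ matrix on the right-hand side of \eqref{hess}. I expect the only real work to be bookkeeping rather than anything deep: one must check the small-norm conditions so that all three applications of Proposition \ref{derprop} are legitimate — in particular that $(X,H)$ lies in $\om\times B(\ch)^d$, which holds because $X\in\om$ — and one must verify that the two nested layers of $2\times 2$ block structure (inner conjugation by $u$, outer by $v$) collapse into a single $4\times 4$ block operator conjugated by $w$, with the component ordering matching the displayed matrix.
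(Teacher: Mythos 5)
Your proposal is correct and follows essentially the same route as the paper: apply Proposition \ref{derprop} at $x$ (unitary $u$, increment $k$), at $X$ (unitary $v$, increment $H$), and to the NC derivative map $\phi$ at $(x,h)$ (unitary $u$, increment $(k,0)$), using $Hf(x)[h,k]=D\phi(x,h)[k,0]$ from Lemma \ref{hessianexists}, then collapse the nested $2\times 2$ blocks into the $4\times 4$ matrix conjugated by $w=(u\oplus u)v$. The only difference is the order of the intermediate computations, which is immaterial.
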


  \begin{proof}
For ease of reading, let us write \[X:=u^{-1}\begin{bmatrix}
x & k \\
0 & x
\end{bmatrix}u \hspace{20pt} {\rm and} \hspace{20pt} H:=u^{-1}\begin{bmatrix}
h & 0 \\
0 & h
\end{bmatrix}u.\]  By closure under direct sums and unitary invariance, $X\in \om$ for $\Vert k\Vert$ sufficiently small, and $$v^{-1}\begin{bmatrix}
X & H \\
0 & X
\end{bmatrix}v \in \om$$ for $\Vert h\Vert$ sufficiently small. We may then compute, by letting $\phi$ be the derivative as in Lemma \ref{hessianexists}, $$\phi(X,H)=u^{-1}\begin{bmatrix}
\phi(x,h) & D\phi(x,h)[k,0] \\
0 & \phi(x,h) 
\end{bmatrix}u=u^{-1}\begin{bmatrix}
Df(x)[h] & Hf(x)[h,k] \\
0 & Df(x)[h] 
\end{bmatrix}u.$$
The left-hand side of (\ref{hess}) is then equal to \begin{align*}
f&\left(v^{-1}\begin{bmatrix}
X & H \\
0 & X
\end{bmatrix}v\right) = v^{-1}\begin{bmatrix}
f(X) & Df(X)(H)\\
0 & f(X)
\end{bmatrix}v \\
&=v^{-1}\begin{bmatrix}
f(X) & u^{-1}\begin{bmatrix}
Df(x)[h] & Hf(x)[h,k] \\
0 & Df(x)[h] 
\end{bmatrix}u \\
0 & f(X)
\end{bmatrix}v \\
&= v^{-1}\begin{bmatrix}
u^{-1}\begin{bmatrix}
 f(x) & Df(x)[k] \\
 0 & f(x)
\end{bmatrix}u & u^{-1}\begin{bmatrix}
Df(x)[h] & Hf(x)[h,k] \\
0 & Df(x)[h] 
\end{bmatrix}u \\
0 & 
u^{-1}\begin{bmatrix}
 f(x) & Df(x)[k] \\
 0 & f(x)
\end{bmatrix}u
\end{bmatrix}v,
\end{align*} which is equal to the right-hand side of (\ref{hess}).
  \end{proof}

  We note that it is possible to derive similar, albeit increasingly complicated formulas for higher order derivatives of NC functions, but we will be content with doing so only for the first derivative and the Hessian, as this is sufficient for our purposes and it illustrates the general principles behind derivative formulas of NC functions on operatorial domains. 

\section{STRONG NC FUNCTIONS AND THE SHIFT FORM} \label{sec: strongshift}

As discussed briefly in the introduction and Section \ref{sec:main}, we want to impose additional requirements of SOT-closedness of each level in an exhaustion of an NC domain, and that of SOT continuity of NC operator functions in order to relax the instances of the hypothesis of the derivative satisfying the NC bounded below property to merely being \emph{injective} at all points. In practice, checking such boundedness below may be difficult in certain cases, but injectivity will typically be more readily verified. 

For $\eps>0$, we call the set $\{x\in B(\ch)^d : \text{dist}\,(x,U)<\eps\},$ of points in $B(\ch)^d$ with distance less than $\eps$ from the set $U,$ the $\eps$\emph{-neighborhood} of $U.$

\begin{definition}\label{sncd}
We say $\om \subset B(\ch)^d$ is a \emph{strong NC domain} if there exists an exhausting sequence $\{\om_k\}_{k=1}^{\infty}$ of $\om$ as in Definition \ref{ncd}, with the additional requirements that 

\begin{enumerate}
\item Each $\om_k$ is closed in the strong operator topology.

\item For each $k$ there is $\eps_k>0$ such that $\om_{k+1}$ contains the $\eps_k$-neighborhood of $\om_k.$
\end{enumerate}
\end{definition}

\begin{definition} \label{sncf}
Let $\om\subset B(\ch)^d$. A function $f:\om \rightarrow B(\ch)^r$ is called a \emph{strong NC function} if

\begin{enumerate}
\item There exists an exhausting sequence $\{\om_k\}_{k=1}^{\infty}$ of $\om$ as in Definition \ref{sncd} such that each restriction $f|_{\om_k}$  is continuous in the strong operator topology. 

\item $f$ is an NC function.
\end{enumerate}
\end{definition}

Since the strong operator topology is metrizable on norm-bounded subsets of $B(\ch)^d$ when $\ch$ is separable, the continuity condition (i) in Definition \ref{sncf} is equivalent to the following sequential criterion: for every $k,$  whenever $x_n$ is a sequence in $\om_k$ with $x_n\rightarrow x$ in SOT, we have $f(x_n)\rightarrow f(x)$ in SOT. Similarly, the condition of each $\om_k$ being SOT-closed in Definition \ref{sncd} is equivalent to a sequential characterization.

We remark further about Definitions \ref{sncd} and \ref{sncf}. Any $B_{\delta},$ as described in Section \ref{sec: prelim}, is a strong NC domain since the exhaustion given in (\ref{bdelta}) satisfies the additional requirements of Definition \ref{sncd}. Indeed, such a $\delta$ is Lipschitz on bounded sets and multiplication is strongly continuous on bounded sets. Moreover, as noncommutative polynomials and rational functions (such as the example in (\ref{example}) on the bidisk) are strongly continuous on appropriate norm-bounded sets, in practice these additional requirements seem rather mild and natural.

Secondly, condition (ii) in Definition \ref{sncd} is just a technical strengthening of the condition $\om_k \subset \emph{\inn}\om_{k+1}$ (which we have been using so far), and it ensures that the derivative of a strong NC function is also a strong NC function. Indeed, if $f:\om \subset B(\ch)^d \rightarrow B(\ch)^r$ is a strong NC function, say with exhausting sequence $\om_k$ as in Definition \ref{sncf}, taking the obvious exhaustion of $\om \times B(\ch)^d$ shows it is a strong NC domain. Furthermore, for every $k,$ whenever $x_n$ is a sequence in $\om_k$ with $x_n\rightarrow x$ in SOT and whenever $h_n\rightarrow h$ in SOT, we have $Df(x_n)[h_n]\rightarrow Df(x)[h]$ in SOT. To see this, fix $k$ and note that by closure under direct sums and unitary invariance of $\om_k,$ there is a unitary $u:\ch\rightarrow \ch^{(2)}$ such that $$u^{-1}\begin{bmatrix}
x_n & 0 \\
0 & x_n
\end{bmatrix}u \in \om_k$$ for all $n$. As the strongly convergent sequence $h_n$ is bounded, condition (ii) in Definition \ref{sncd} implies there is $\eps>0$ (independent of $n$) such that $$u^{-1}\begin{bmatrix}
x_n & \eps h_n \\
0 & x_n
\end{bmatrix}u \in \om_{k+1}$$ for all $n.$ Therefore, by Proposition \ref{derprop} and because $f|_{\om_{k+1}}$ is strongly continuous, 
\begin{align*}
 u^{-1}\begin{bmatrix}
f(x) & \eps Df(x)[h] \\
0 & f(x)
\end{bmatrix}u &= f\left(u^{-1}\begin{bmatrix}
x & \eps h \\
0 & x
\end{bmatrix}u\right) \\
&= \lim_{n\rightarrow \infty} f\left(u^{-1}\begin{bmatrix}
x_n & \eps h_n \\
0 & x_n
\end{bmatrix}u\right) \\
&= \lim_{n\rightarrow \infty} u^{-1}\begin{bmatrix}
f(x_n) & \eps Df(x_n)[h_n] \\
0 & f(x_n)
\end{bmatrix}u,\end{align*} where all limits are in the strong operator topology. Therefore, we conclude $Df(x_n)[h_n]\rightarrow Df(x)[h]$ in SOT.

In order to prove non-trivial results such as Theorem \ref{strongbb} for strong NC functions, we turn our attention to the notion of "shift forms". The following construction is motivated by the dilation theory introduced by A. Frazho \cite{FRAZHO2}, \cite{FRAZHO} and G. Popescu \cite{pop3}, \cite{pop4} and was privately communicated to the author by J. E. Pascoe. Similar ideas in a different setting were utilized in \cite{passerpascoe}. 

The separability of the underlying Hilbert space will now be used extensively. Throughout this section, we fix a countable orthonormal basis $\{e_1, e_2, \ldots\}$ for $\ch.$ Given a $d$-tuple $X\in B(\ch)^d,$ the idea is to find a unitary operator in $B(\ch)$ which provides a basis for $\ch$ on which the coordinates of $X$ essentially act as shifts.

Let $M$ be the shift operator $Me_k=e_{k+1}$. For the sake of brevity, we write $(X,M)$ for the $(d+1)$-tuple $(X^1,\ldots,X^d,M).$ 
We will denote the complex vector space of polynomials in $(d+1)$ noncommuting variables of degree less than or equal to $k$
 by $\mathcal{P}(k,d)$ and write $\alpha(k,d)$ for its dimension.
Begin by defining a nested sequence of subspaces of $\ch:$
\[V_k^X:= \{p(X,M)e_1 : p \in  \mathcal{P}(k,d)\},
\] for $k\geq 0.$ We record the following properties of the $V_k^X:$ 

\begin{enumerate}
\item For each $k\geq 0,$ we have $e_1,\ldots, e_{k+1}\in V_k^X$. In particular, $$\ch=\overline{\bigcup_{k=0}^{\infty} V_k^X}.$$

\item The inclusion $X^i V_k^X \subset V_{k+1}^X$ holds for all $i=1,\ldots,d$ and $k\geq 0.$ 

\item The $V_k^X$ form a strictly increasing sequence.

\item The inequality $$\dim V_k^X \leq \alpha(k,d)$$ holds for all $k\geq 0,$ independent of the choice of $d$-tuple $X.$  
\end{enumerate}

Properties (i), (iii), and (iv) above imply that there exists a unitary operator $u\in B(\ch)$, depending on $d$ and $X,$ but not $k$, such that \begin{align} \label{sh1}
u\,(\text{span}\, \{e_1,\ldots,e_k, e_{k+1}\})\subset V_k^X
\end{align}
and 
\begin{align} \label{sh2}
u^*(V_k^X)\subset \text{span}\, \{e_1,\ldots, e_{\alpha(k,d)}\}
\end{align}
hold for every $k\geq 0.$ For a unitary $u$ satisfying (\ref{sh1}) and (\ref{sh2}), we call the $d$-tuple \begin{align*}\widetilde{X}:=u^*Xu\end{align*} a \emph{shift form} of $X.$ We note that there may well be more than one such unitary for a given $X,$ but for our purposes, the existence of at least one is sufficient. Moreover, the results proved in the present section are independent of choice of shift form; all that is required are the four properties listed above. 

This construction allows us to prove an SOT compactness-like theorem for bounded subsets of $B(\ch)^d$. It is well-known that the unit ball of $B(\ch)$ is not SOT (sequentially) compact when $\ch$ is infinite dimensional, but we prove in Lemma \ref{sotcompact} that for any bounded sequence in $B(\ch)^d,$ there is a subsequence along which its sequence of shift forms converge SOT. More precisely, in fact, given a bounded sequence $X_n \in B(\ch)^d,$ and given any sequence of unitaries $u_n$ such that $u_n^*X_nu_n$ is a shift form of $X_n$ for each $n,$ there is a subsequence along which $u_n^*X_nu_n$ converges in SOT. This statement lends itself nicely to applications with strong NC functions since they preserve conjugations by unitary operators and are strongly continuous when restricted to certain unitarily invariant sets. Moreover, we have sufficient norm control over the shift forms so that, after conjugating by further unitaries if necessary, the subsequential limit will have large norm if the original sequence is bounded away from zero (see part (ii) of Lemma \ref{sotcompact}). 

Lemma \ref{sizeshift} below is a technical ingredient used in this note only in the proof of part (ii) of Lemma \ref{sotcompact} but is an interesting property of shift forms in its own right.

\begin{lemma}\label{sizeshift}
If $X\in B(\ch)^d$ and $k\geq 1$, then by letting $P_k$ denote the projection onto the subspace spanned by the first $k$ basis vectors $e_1,\ldots, e_k,$ we have \begin{align} \label{shiftinequality}
\Vert P_k X^iP_k\Vert \leq \Vert P_{\alpha(k,d)} \widetilde{X}^iP_{\alpha(k,d)} \Vert
\end{align} for each $i=1,\ldots, d$ and choice of shift form $\widetilde{X}$ of $X.$
\end{lemma}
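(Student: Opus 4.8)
The plan is to exploit the two defining properties (\ref{sh1}) and (\ref{sh2}) of the unitary $u$ implementing the shift form $\widetilde{X}=u^*Xu$, and to realize both sides of (\ref{shiftinequality}) as norms of restrictions of the same operator $X^i$ to an appropriate pair of nested subspaces. Write $E_k:=\text{span}\,\{e_1,\ldots,e_k\}$, so $P_k$ is the orthogonal projection onto $E_k$, and observe first that (\ref{sh1}) applied with index $k-1$ gives $u(E_k)\subset V_{k-1}^X$, while (\ref{sh2}) gives $u(V_{k-1}^X)\subset E_{\alpha(k-1,d)}\subset E_{\alpha(k,d)}$ (using that $\alpha(\cdot,d)$ is nondecreasing). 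Hence $u(E_k)\subset V_{k-1}^X$ and $u(V_{k-1}^X)$ sits inside $E_{\alpha(k,d)}$; these containments are the whole engine of the proof.

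The key computation is then as follows. For a unit vector $\xi\in E_k$, write $\eta:=u\xi\in V_{k-1}^X$. By property (ii) of the subspaces $V_\bullet^X$, we have $X^i\eta\in X^i V_{k-1}^X\subset V_k^X$, so $u^*X^i\eta\in u^*(V_k^X)\subset E_{\alpha(k,d)}$ by (\ref{sh2}); therefore $P_{\alpha(k,d)}u^*X^i u\xi = u^*X^i u\xi$, i.e. $\widetilde{X}^i$ maps $E_k$ into $E_{\alpha(k,d)}$. Consequently, for $\xi\in E_k$,
\[
\|P_{\alpha(k,d)}\widetilde{X}^i P_{\alpha(k,d)}\xi\| = \|P_{\alpha(k,d)}\widetilde{X}^i\xi\| = \|\widetilde{X}^i\xi\| = \|X^i u\xi\| \ge \|P_{\alpha(k,d)}X^i u\xi\|.
\]
Meanwhile I want to compare with $\|P_kX^iP_k\|$. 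The cleanest route: for a unit vector $\xi\in E_k=\text{ran}\,P_k$, the vector $u\xi$ lies in $V_{k-1}^X$, and $P_k X^i \xi$ versus $\widetilde X^i \xi$ — here one must be a little careful because $u$ need not fix $E_k$ pointwise. Instead I will argue directly at the level of operator norms: since $\widetilde X^i(E_k)\subset E_{\alpha(k,d)}$, the restriction $\widetilde X^i|_{E_k}$ equals $P_{\alpha(k,d)}\widetilde X^i P_k$ (as a map out of $E_k$), so
\[
\|P_{\alpha(k,d)}\widetilde X^i P_{\alpha(k,d)}\| \ge \|P_{\alpha(k,d)}\widetilde X^i P_k\| = \sup_{\xi\in E_k,\,\|\xi\|=1}\|\widetilde X^i\xi\| = \sup_{\xi\in E_k,\,\|\xi\|=1}\|X^i u\xi\| = \sup_{\zeta\in u(E_k),\,\|\zeta\|=1}\|X^i\zeta\|,
\]
using that $u$ is unitary. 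Finally, by (\ref{sh1}) with index $k-1$, $u(E_k)\subset V_{k-1}^X$, and one checks $E_k\subset V_{k-1}^X$ as well (property (i): $e_1,\ldots,e_k\in V_{k-1}^X$). The subtle point is that I need the supremum over $u(E_k)$ to dominate the supremum over $E_k$; since both are $k$-dimensional subspaces of the (at least $k$-dimensional, by property (iii)–(iv) bookkeeping) space $V_{k-1}^X$ but need not be equal, I cannot directly compare.

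To get around this I will instead bound $\|P_kX^iP_k\|$ by $\sup_{\zeta\in V_{k-1}^X,\|\zeta\|=1}\|X^i\zeta\|$, which is valid since $\text{ran}\,P_k=E_k\subset V_{k-1}^X$ gives $\|P_kX^iP_k\xi\|\le\|X^i\xi\|\le\sup_{\zeta\in V_{k-1}^X,\|\zeta\|=1}\|X^i\zeta\|$ for unit $\xi\in E_k$; and I will bound the right-hand side of (\ref{shiftinequality}) below by the same quantity, using $u(E_{?})$ — here the issue resurfaces, because (\ref{sh1}) only gives that $u$ maps the $(k+1)$-dimensional space $E_{k+1}$ (my $\{e_1,\dots,e_{k+1}\}$) \emph{into} $V_k^X$, not \emph{onto} $V_{k-1}^X$. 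I expect this surjectivity/dimension-matching to be the main obstacle. The resolution is a dimension count: by property (iii) the $V_j^X$ strictly increase, so $\dim V_{k-1}^X\ge k$; by (\ref{sh1}), $u(E_{k+1})\subset V_k^X$ is a $(k+1)$-dimensional subspace, and I want to conclude that $u$ restricted suitably lands its image densely enough. A cleaner fix: note (\ref{sh1}) and (\ref{sh2}) together force, for each $j$, the chain $u(E_{j+1})\subset V_j^X\subset u(E_{\alpha(j,d)})$, and since $\dim E_{j+1}=j+1$ while $\dim V_j^X$ can be as large as $\alpha(j,d)$, I should pick the index in $P_k$ on the left of (\ref{shiftinequality}) to be exactly such that $u(\text{span}\{e_1,\dots,e_k\})\supset$ a controlled subspace. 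Given the statement is asserted as written, I will trust that the intended argument is precisely: (\ref{sh1}) with parameter $k-1$ yields $V_{k-1}^X\subset u(E_{\ell})$ for $\ell$ with $\alpha(k-1,d)$, hence $\sup_{E_k}\|X^i\,\cdot\|$ along vectors of $E_k\subset V_{k-1}^X\subset u(E_{\alpha(k-1,d)})$ is dominated by $\sup_{u(E_{\alpha(k-1,d)})}\|X^i\,\cdot\|=\sup_{\xi\in E_{\alpha(k-1,d)}}\|\widetilde X^i u^* u\xi\|$, wait — I will instead present the two-sided sandwiching through $V_{k-1}^X$ as the spine of the proof and flag the dimension bookkeeping via (\ref{sh1})–(\ref{sh2}) and property (iv) as the one genuinely delicate step, carrying it out by a direct verification that $P_k X^i P_k$ and $P_{\alpha(k,d)}\widetilde X^i P_{\alpha(k,d)}$ are, respectively, a compression of $X^i$ to $E_k$ and a compression containing as a further compression the copy $u^*( \cdot )u$ of $X^i|_{V_{k-1}^X}$, so that the norm of the former is at most that of $X^i|_{V_{k-1}^X}$ and the norm of the latter is at least that of $X^i|_{V_{k-1}^X}$ — wait, I will simply write: $\|P_kX^iP_k\|\le\|X^i|_{V_{k-1}^X}\|$ since $E_k\subseteq V_{k-1}^X$ and $X^i(E_k)\subseteq V_k^X$, and $\|X^i|_{V_{k-1}^X}\|\le\|P_{\alpha(k,d)}\widetilde X^iP_{\alpha(k,d)}\|$ since $u^*(V_{k-1}^X)\subseteq E_{\alpha(k,d)}$ (by (\ref{sh2})) is unitarily equivalent via $u$ to $V_{k-1}^X$, $X^i$ maps $V_{k-1}^X$ into $V_k^X$ with $u^*(V_k^X)\subseteq E_{\alpha(k,d)}$ (again (\ref{sh2})), and therefore $\widetilde X^i$ maps $u^*(V_{k-1}^X)\subseteq E_{\alpha(k,d)}$ into $E_{\alpha(k,d)}$ isometrically-conjugate to $X^i|_{V_{k-1}^X}$, making $P_{\alpha(k,d)}\widetilde X^iP_{\alpha(k,d)}$ restrict on $u^*(V_{k-1}^X)$ to exactly that conjugate. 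Chaining these two inequalities gives (\ref{shiftinequality}), and the only point requiring care — which I expect to be the crux — is verifying that $X^i|_{V_{k-1}^X}$ genuinely appears as a compression of $P_{\alpha(k,d)}\widetilde X^iP_{\alpha(k,d)}$, i.e. that $\widetilde X^i$ does not ``leak'' out of $E_{\alpha(k,d)}$ when applied to vectors of $u^*(V_{k-1}^X)$, which is exactly property (ii) ($X^iV_{k-1}^X\subseteq V_k^X$) combined with (\ref{sh2}) at level $k$.
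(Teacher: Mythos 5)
Your final chained argument is correct and is essentially the paper's own proof: the paper likewise takes a unit vector $y\in\mathrm{span}\{e_1,\ldots,e_k\}\subset V_{k-1}^X$, notes via (\ref{sh2}) together with $X^iV_{k-1}^X\subset V_k^X$ that both $u^*y$ and $u^*X^iy$ lie in $\mathrm{span}\{e_1,\ldots,e_{\alpha(k,d)}\}$, and estimates $\Vert P_kX^iP_ky\Vert\le\Vert X^iy\Vert=\Vert P_{\alpha(k,d)}\widetilde{X}^iP_{\alpha(k,d)}u^*y\Vert\le\Vert P_{\alpha(k,d)}\widetilde{X}^iP_{\alpha(k,d)}\Vert$, which is exactly your sandwich through $\Vert X^i|_{V_{k-1}^X}\Vert$ collapsed into one chain (note neither argument actually needs (\ref{sh1})). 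The abandoned detours in your write-up (e.g.\ the slip $u(V_{k-1}^X)\subset E_{\alpha(k-1,d)}$ where $u^*$ is meant, and the speculated containment $V_{k-1}^X\subset u(E_\ell)$, which (\ref{sh1}) does not provide) play no role in the final argument and should simply be deleted.
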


\begin{remark}\rm
The proof shows, in fact, that the norm inequality (\ref{shiftinequality}) can be refined slightly. For example, under the hypotheses of Lemma \ref{sizeshift}, it holds that \begin{align*} \Vert X^iP_k\Vert \leq \Vert P_{\alpha(k,d)} \widetilde{X}^iP_{\alpha(k-1,d)} \Vert.
\end{align*} Since we do not require this inequality moving forward, we opt for the more visually symmetric (\ref{shiftinequality}).
\end{remark}

\begin{proof}
Write $\widetilde{X}=u^*Xu$ for a unitary $u$ satisfying (\ref{sh1}) and (\ref{sh2}). Let $y \in \text{span}\,\{e_1,\ldots, e_k\}$ with $\Vert y\Vert \leq1.$ Since $\text{span}\,\{e_1,\ldots, e_k\} \subset V_{k-1}^X,$ we know by (\ref{sh2}) that the containment $u^*y \in \text{span}\, \{e_1,\ldots, e_{\alpha(k-1,d)}\}$ holds. Furthermore, this implies $X^iy\in V_k^X$, and so $u^*X^iy \in \text{span}\, \{e_1,\ldots, e_{\alpha(k,d)}\}.$ Therefore we may estimate  \begin{align*} \Vert P_kX^iP_k y\Vert &\leq \Vert X^i y\Vert = \Vert u^*X^iy\Vert \\
&=\Vert P_{\alpha(k,d)}u^*X^iy\Vert =\Vert P_{\alpha(k,d)}[u^*X^iu]u^*y\Vert \\
 &= \Vert P_{\alpha(k,d)}[u^*X^iu]P_{\alpha(k,d)}u^*y\Vert \\
 &\leq \Vert P_{\alpha(k,d)} \widetilde{X}^iP_{\alpha(k,d)} \Vert.\end{align*} Taking supremum over such $y$ finishes the proof.
\end{proof}

Part (ii) of Lemma \ref{sotcompact} will be used in the proof of Theorem \ref{strongbb}. In the notation of this lemma, we need $H'\neq 0$ to ensure it is not in the kernel of any injective derivative map of a strong NC function.

\begin{lemma}\label{sotcompact}
The following two convergence properties hold.

\begin{enumerate}
\item Let $X_n$ be a bounded sequence in $B(\ch)^d.$ For any sequence of shift forms $\widetilde{X_n}$ of $X_n$, there is a subsequence along which $\widetilde{X_n}$ converges in SOT. In particular, given a bounded sequence $X_n$ in $B(\ch)^d$, there exists a sequence of unitaries $U_n$ such that $U_n^*X_nU_n$ converges in SOT along a subsequence.

\item Suppose $X \in B(\ch)^d$ and $H_n\in B(\ch)^d$ with $\Vert H_n\Vert =1$ for all $n.$ Then there exist unitaries $W_n \in B(\ch),$ a point $(X',H')\in B(\ch)^{2d}$ with $H'\neq 0$, and a subsequence along which  $$W_n^*(X,H_n)W_n \rightarrow (X',H')$$ in SOT.
\end{enumerate}
\end{lemma}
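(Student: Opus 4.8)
The plan is to reduce both parts to a single underlying compactness principle: a sequence of $d$-tuples whose coordinates have uniformly bounded norm and whose matrix entries (against the fixed basis $\{e_n\}$) converge entrywise must converge in SOT. I would first record this principle as the engine of the argument. Given a bounded sequence $T_n$ in $B(\ch)$ with $\|T_n\| \le C$, each scalar sequence $\langle T_n e_j, e_i \rangle$ is bounded in $\C$, so a diagonal/Cantor extraction over all pairs $(i,j) \in \N \times \N$ produces a subsequence along which every entry converges; call the entrywise limit matrix $T$. The uniform norm bound forces the columns $T_n e_j$ to converge in norm (not merely weakly) to $T e_j$: indeed $\|T_n e_j - T e_j\|^2 = \sum_i |\langle (T_n - T)e_j, e_i\rangle|^2$, and the tail is controlled uniformly in $n$ because $\|T_n e_j\| \le C$ and $\|T e_j\| \le C$ (the latter by Fatou), so one splits into a finite sum of entries (which vanishes along the subsequence) and a uniformly small tail. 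Hence $T_n \to T$ strongly, with $\|T\| \le C$. Applying this coordinatewise to a $d$-tuple gives the principle for $B(\ch)^d$.

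For part (i): let $\widetilde{X_n} = u_n^* X_n u_n$ be shift forms, with $u_n$ satisfying (\ref{sh1}) and (\ref{sh2}). Since conjugation by a unitary preserves operator norm, $\|\widetilde{X_n}^i\| = \|X_n^i\| \le \sup_n \|X_n\| =: C < \infty$, so the sequence $\widetilde{X_n} \in B(\ch)^d$ is bounded. By the compactness principle above, extract a subsequence along which $\widetilde{X_n}$ converges in SOT. The ``in particular'' statement follows by setting $U_n := u_n$.

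For part (ii): apply part (i) to the bounded sequence $(X, H_n) \in B(\ch)^{2d}$, where boundedness holds because $\|X\|$ is fixed and $\|H_n\| = 1$. This yields unitaries $W_n$ (the shift-form unitaries for the tuple $(X, H_n)$, viewed as a $2d$-tuple) and a subsequence along which $W_n^*(X, H_n) W_n \to (X', H')$ in SOT for some $(X', H') \in B(\ch)^{2d}$. The main obstacle — and the reason Lemma \ref{sizeshift} is invoked here — is ensuring $H' \ne 0$: weak/strong limits of norm-one operators can easily be zero (e.g. $\|H_n\| = 1$ but $H_n \to 0$ strongly). To handle this, I would argue as follows. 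Since $\|H_n\| = 1$, for each $n$ pick $k_n$ with $\|P_{k_n} H_n^{i_n} P_{k_n}\| > 1/2$ for some coordinate $i_n$; by passing to a subsequence we may assume $i_n = i$ is fixed. Now Lemma \ref{sizeshift} applied to the shift form gives $\|P_{\alpha(k_n, 2d)} \widetilde{(X,H_n)}^{\,i}\, P_{\alpha(k_n,2d)}\| \ge \|P_{k_n} H_n^i P_{k_n}\| > 1/2$; here $\widetilde{(X,H_n)}^{\,i}$ is the $i$-th of the last $d$ coordinates of $W_n^*(X,H_n)W_n$, i.e.\ a piece of $H'$-to-be. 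The subtlety is that $k_n$ may depend on $n$, so one cannot directly pass to the limit in a \emph{fixed} compression $P_k$. I would resolve this by inserting an \emph{additional} unitary conjugation: after the shift-form step, conjugate each term by a further unitary $V_n$ supported on $\mathrm{span}\{e_1, \dots, e_{\alpha(k_n, 2d)}\}$ that moves a unit vector witnessing the norm $\|P_{\alpha(k_n,2d)} \widetilde{H_n}^i P_{\alpha(k_n,2d)}\| > 1/2$, together with its image, into $\mathrm{span}\{e_1, e_2\}$; then $\|P_2 (V_n^* \widetilde{H_n}^i V_n) P_2\| > 1/2$ with a \emph{fixed} compression $P_2$. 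Replace $W_n$ by $W_n V_n$ (still unitary, still a valid conjugation preserving the NC/strong structure). Re-extract a convergent subsequence via part (i); the limit $H'$ now satisfies $\|P_2 (H')^i P_2\| \ge 1/2$ by SOT-continuity of the compression on the bounded set, so $H' \ne 0$. Finally, relabel and conclude. The bookkeeping of composing the two layers of unitaries and checking that the final $W_n$ still conjugates everything consistently is the most delicate part, but it is routine once the compactness principle and Lemma \ref{sizeshift} are in hand.
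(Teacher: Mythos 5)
There is a genuine gap, and it is fatal to the argument as written: the ``compactness principle'' that drives your proof is false. Entrywise (Cantor diagonal) extraction only yields convergence of a subsequence in the \emph{weak} operator topology; the claimed upgrade to norm convergence of the columns does not work, because a uniform bound $\Vert T_ne_j\Vert\leq C$ gives no uniform control of the tails $\sum_{i>N}|\langle T_ne_j,e_i\rangle|^2$. Concretely, take $T_n=e_n\otimes e_1^*$, i.e.\ $T_nx=\langle x,e_1\rangle e_1$ replaced by $\langle x,e_1\rangle e_n$: then $\Vert T_n\Vert=1$, every matrix entry tends to $0$, but $T_ne_1=e_n$ has no norm-convergent subsequence, so no subsequence of $T_n$ converges in SOT. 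Indeed, the paper points out explicitly that the unit ball of $B(\ch)$ is \emph{not} SOT sequentially compact; if your principle were true, the entire shift-form construction would be superfluous and one could take $U_n=1$. The correct proof of part (i) must use the structural properties (\ref{sh1}) and (\ref{sh2}) of the shift form, which your argument never invokes beyond norm-boundedness: they force $\widetilde{X_n}^{\,i}e_k$ to lie, for \emph{all} $n$, in the fixed finite-dimensional subspace $\mathrm{span}\{e_1,\ldots,e_{\alpha(k,d)}\}$, so each column sequence is norm-precompact; diagonalizing over $(i,k)$ and using boundedness then gives SOT convergence. This is exactly the uniform tail control your sketch assumes but does not have in general.

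Your plan for part (ii) is closer in spirit to a correct argument (and to the paper's: there one first conjugates by unitaries $Q_n$ so that the norm of $H_n^i$ is witnessed in the fixed corner $P_2$, then takes shift forms of $Q_n^*(X,H_n)Q_n$ and applies Lemma \ref{sizeshift} with $k=2$ in $2d$ variables to transfer the lower bound $1-\tfrac1n$ to the fixed corner $P_{\alpha(2,2d)}$ of the shift forms, where SOT convergence implies norm convergence). But as written, your final step also leans on the false principle: after conjugating by your auxiliary unitaries $V_n$ you ``re-extract via part (i),'' and if part (i) is done correctly this means conjugating by yet another layer of shift-form unitaries, which destroys the $P_2$-corner bound unless you invoke Lemma \ref{sizeshift} a second time to move it into the fixed corner $P_{\alpha(2,2d)}$ of the new shift forms. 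That repair is available, but it is missing; with the compactness principle removed, neither part of the proposal stands without reworking the argument around the shift-form containments.
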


\begin{proof}
(i) For each $n$, let $\widetilde{X_n}=u_n^*X_nu_n$ be any shift form of $X_n$. For every $n$, $i=1,\ldots, d$, and $k\geq 1,$ properties (\ref{sh1}) and (\ref{sh2}) imply \begin{align*}
\widetilde{X_n}^i (\text{span}\, \{e_1,\ldots,e_k\}) &= u_n^*X_n^iu_n (\text{span}\, \{e_1,\ldots,e_k\}) \\
&\subset u_n^*X_n^i (V_{k-1}^{X_n}) \\
&\subset u_n^* (V_k^{X_n})\\
&\subset \text{span}\, \{e_1,\ldots,e_{\alpha(k,d)}\}.
\end{align*} Therefore, for every $i=1,\ldots, d$ and $k\geq 1,$ the sequence $\{\widetilde{X_n}^i e_k\}_{n=1}^{\infty}$ is bounded and contained in a finite dimensional subspace. By a diagonalization argument, we may then find a subsequence $n_j$ so that $\widetilde{X_{n_j}}^i e_k$ converges for every $i=1,\ldots, d$ and $k\geq 1.$ By boundedness again, this implies $\widetilde{X_{n_j}}^i$ converges SOT for every $i=1,\ldots, d.$

(ii) By passing to a subsequence if necessary, we may assume there is $i\in \{1,\ldots, d\}$ such that $\Vert H_n^i \Vert=1$ for all $n.$ We again denote by $P_k$ the projection onto the subspace spanned by the first $k$ basis vectors $e_1,\ldots, e_k.$ First note that if $T \in B(\ch)$ has operator norm equal to 1, then for every $\eps>0$ small there exists a unitary $W \in B(\ch)$ such that $1-\eps \leq \Vert P_2 W^*TWP_2\Vert.$ This can be seen by choosing a unit vector $v$ which approximates the norm of $T$, and then defining a unitary which maps $e_1$ to $v$, and $e_2$ to a suitable linear combination $av+bTv$.

Applying this to $H_n^i$ for each $n,$ we can find unitaries $Q_n \in B(\ch)$ so that $$1-\frac{1}{n} \leq \Vert P_2 Q_n^*H_n^iQ_nP_2\Vert.$$  Since $X_n:=Q_n^*(X,H_n)Q_n$ is a bounded sequence in $B(\ch)^{2d},$ by part (i) there is a sequence of unitaries $U_n$ and a subsequence $n_j$ along which $\widetilde{X_n}=U_n^*X_nU_n$ converges in SOT, say to $(X',H') \in B(\ch)^{2d}$. Define $W_n:=Q_nU_n$. Combining this and (\ref{shiftinequality}) with $k=2$ and $2d$ variables, we estimate \begin{align*}
1-\frac{1}{j} &\leq  \Vert P_2 Q_{n_j}^*H_{n_j}^iQ_{n_j} P_2\Vert \\
&= \Vert P_2 X_{n_j}^{d+i} P_2\Vert \\
&\leq  \Vert P_{\alpha(2,2d)} \widetilde{X_{n_j}}^{d+i} P_{\alpha(2,2d)} \Vert \\
&= \Vert P_{\alpha(2,2d)} W_{n_j}^*H_{n_j}^iW_{n_j} P_{\alpha(2,2d)} \Vert . 
\end{align*} Since strong convergence implies norm convergence on finite dimensional spaces, taking the limit as $j\rightarrow \infty$ in the above estimate implies $$1\leq \Vert P_{\alpha(2,2d)} (H')^i P_{\alpha(2,2d)} \Vert \leq \Vert H'\Vert.$$ Therefore $H'\neq 0,$ which concludes the proof.
\end{proof}

\begin{remark}\rm
The proof of part (i) of Lemma \ref{sotcompact} shows further that we can choose a subsequence along which $u_n^*X_nu_n$ and $u_n^*$ both converge in SOT. This is often a useful property of the unitaries defining the shift forms since the SOT limit of $u_n^*$ is necessarily an isometry. These observations can be helpful in the study of convexity in the operator setting.
\end{remark}

\section{PROOFS OF MAIN RESULTS} \label{sec: proofs}

We now provide detailed proofs of the main results listed in Section \ref{sec:main}. To begin, we need a general result on linear maps between Banach spaces. As the author could not find a suitable source in the literature, we include its statement and simple proof below for convenience. $B(X,Y)$ denotes the bounded linear maps between the Banach spaces $X$ and $Y$, with the operator norm.

 \begin{lemma}\label{banach}
Let $X$ and $Y$ be  Banach spaces and fix $\alpha>0$. The set of maps in $B(X,Y)$ that are surjective and bounded below by $\alpha$ is norm-closed.
\end{lemma}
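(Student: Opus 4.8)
The plan is to take a sequence $T_n$ in $B(X,Y)$, each surjective and bounded below by $\alpha$, with $T_n \to T$ in operator norm, and show that $T$ is again surjective and bounded below by $\alpha$. The bounded-below part is immediate: for any $x \in X$ and any $n$, $\Vert T x\Vert \geq \Vert T_n x\Vert - \Vert (T-T_n)x\Vert \geq \alpha\Vert x\Vert - \Vert T - T_n\Vert\, \Vert x\Vert$, and letting $n\to\infty$ gives $\Vert Tx\Vert \geq \alpha \Vert x\Vert$. In particular $T$ is injective with closed range.

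The real content is surjectivity, and this is where I expect the main obstacle to lie, since a norm limit of surjections need not be surjective in general (it is the two-sided estimate — bounded below \emph{and} surjective — that saves us). First I would record that bounded below by $\alpha$ plus surjective is equivalent to saying $T_n$ is invertible with $\Vert T_n^{-1}\Vert \leq 1/\alpha$. Indeed, surjectivity plus injectivity (from being bounded below) gives a bijection, bounded below by $\alpha$ translates exactly to $\Vert T_n^{-1} y\Vert \leq \alpha^{-1}\Vert y\Vert$, and the open mapping theorem (or just the bounded-below estimate) shows $T_n^{-1}$ is bounded. So the lemma reduces to: the set of invertible maps with $\Vert T^{-1}\Vert \leq 1/\alpha$ is norm-closed, which is a standard Neumann series argument.

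Concretely, once $T_n \to T$ with each $T_n$ invertible and $\Vert T_n^{-1}\Vert \leq 1/\alpha$, fix $n$ large enough that $\Vert T - T_n\Vert < \alpha$. Then $T = T_n(I + T_n^{-1}(T - T_n))$, and $\Vert T_n^{-1}(T-T_n)\Vert \leq \alpha^{-1}\Vert T - T_n\Vert < 1$, so $I + T_n^{-1}(T-T_n)$ is invertible by the Neumann series, hence $T$ is a composition of two invertible operators and is therefore invertible — in particular surjective. Combined with the bounded-below estimate already shown (which also re-confirms $\Vert T^{-1}\Vert \leq 1/\alpha$ in the limit), this completes the proof. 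The only subtlety worth stating explicitly is the equivalence in the second paragraph — that ``surjective and bounded below by $\alpha$'' is the same as ``invertible with inverse of norm $\leq 1/\alpha$'' — since that is what converts the problem into the familiar statement that the invertibles form an open set with locally controlled inverse.
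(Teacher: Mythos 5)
Your proof is correct. The bounded-below half is the same as the paper's, and both surjectivity arguments rest on the same key observation that ``surjective and bounded below by $\alpha$'' is equivalent to ``invertible with $\Vert T_n^{-1}\Vert \leq 1/\alpha$''; where you diverge is the mechanism for the limit. The paper uses completeness of $B(Y,X)$: from the identity $\Vert T_n^{-1}-T_m^{-1}\Vert \leq \Vert T_m^{-1}\Vert\,\Vert T_m-T_n\Vert\,\Vert T_n^{-1}\Vert \leq \alpha^{-2}\Vert T_m-T_n\Vert$ it deduces that the inverses form a Cauchy sequence, and then passes to the limit in $T_nT_n^{-1}=1_Y$ to exhibit a right inverse of $T$. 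You instead fix a single $n$ with $\Vert T-T_n\Vert<\alpha$, factor $T=T_n\bigl(I+T_n^{-1}(T-T_n)\bigr)$, and invoke the Neumann series — the standard ``invertibles form an open set with locally controlled inverse'' argument. Your route needs only one sufficiently close $T_n$ rather than the whole tail of the sequence, while the paper's route gives the (unneeded for the lemma) extra information that $T_n^{-1}\to T^{-1}$ in norm; both are complete and yield exactly the statement claimed.
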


\begin{proof}
Let $T_n$ be a sequence of such linear maps converging to $T$. As $\alpha\Vert x\Vert \leq \Vert T_nx\Vert$ holds for all $n$ and $x\in X,$ we see that $\alpha\Vert x\Vert \leq \Vert Tx\Vert$ for all $x\in X,$ so $T$ is bounded below by $\alpha.$ 

Now we show $T$ must also be surjective. The uniform bound below on the $T_n$ implies the sequence of inverses $T_n^{-1}$ is uniformly bounded in operator norm by $1/\alpha.$ Thus, the estimate $$\Vert T_n^{-1}-T_m^{-1}\Vert \leq \Vert T_m^{-1}\Vert \Vert T_m-T_n\Vert \Vert T_n^{-1}\Vert \leq 1/\alpha^2 \Vert T_m-T_n\Vert$$ shows $T_n^{-1}$ is a convergent sequence in $B(Y,X)$. Since $T_nT_n^{-1}=1_Y$ for all $n,$ we immediately see that $T$ is surjective.
\end{proof}

We reiterate that the notion of connectedness in what follows is with respect to the norm topology on $B(\ch)^d$. It is suggested that the reader recall Definition \ref{ncbbp} of the NC bounded below property.

\begin{proof}[Proof of Theorem \ref{bbthm}]
By hypothesis, the set \[U:=\{x\in \om : Df(x) \,\,\text{is invertible}\}\] is non-empty. Since invertible maps form a norm-open set in $B(B(\ch)^d),$ the continuity of the map $x\mapsto Df(x)$ implies $U$ is open in norm.

As $\om$ is connected, it suffices to show $U$ is also closed in $\om.$ To that end, take a sequence $x_n$ in $U$ converging to  $x \in \om$. We claim there is a uniform $\alpha>0$ such that each $Df(x_n)$ is bounded below by $\alpha.$ To see this, take an exhaustion $\{\om_k\}$ of $\om$ as in Definition \ref{ncd}. Since $x_n\rightarrow x \in \om,$ and since the exhaustion satisfies $\om_k \subset \text{int}\,\om_{k+1}$, there is $k$ large enough so that all the $x_n$ lie in $\om_k.$ Since $\om_k$ is closed under countably infinite direct sums, there is $u:\ch \rightarrow \ch^{(\infty)}$ unitary such that $$z:=u^{-1}\begin{bmatrix}
x_1 & & \\
& x_2 & \\
& & \ddots
\end{bmatrix}u \in \om_k.$$ By the hypothesis of $Df$ satisfying the NC bounded below property, $Df(z)$ is bounded below, say by $\alpha>0.$ Now fix $n$ and let $h \in B(\ch)^d$ be arbitrary. Let $h_n$ denote the diagonal matrix with $h$ in the $n$th diagonal entry and 0 else. Since \begin{align} \label{derivdirect} Df(z)[u^{-1}h_nu]=u^{-1}\begin{bmatrix}
0 & & & & \\
& \ddots & & & \\
& & Df(x_n)[h] & & \\
& & & 0 & \\
& & & & \ddots
\end{bmatrix}u 
\end{align} holds by Lemma \ref{hessianexists} (i), we may take norms in (\ref{derivdirect}) to get $$\Vert Df(x_n)[h]\Vert = \Vert Df(z)[u^{-1}h_nu] \Vert \geq \alpha \Vert u^{-1}h_nu\Vert= \alpha \Vert h\Vert.$$ This implies each $Df(x_n)$ is bounded below by $\alpha.$  Again, since $f$ is $C^1$, we have $Df(x_n)\rightarrow Df(x)$ in norm so Lemma \ref{banach} implies $Df(x)$ is invertible. Thus $x\in U$ and $U$ is closed in $\om$.
\end{proof}

With this sufficient condition for global invertibility of the derivative of an NC function now obtained, we can prove our inverse function theorem:

\begin{proof}[Proof of Theorem \ref{inverse}]
By Theorem \ref{bbthm}, $Df(x)$ is an invertible linear mapping $B(\ch)^d\rightarrow B(\ch)^d$ for every $x\in \om.$ Theorem \ref{injthm} tells us that $f$ is then injective on $\om,$ so $f^{-1}$ exists as a map $f(\om)\rightarrow \om.$
We must show $f(\om)$ and $f^{-1}$ are both NC. In fact, we claim that if we take an exhausting sequence $\{\om_k\}$ for $\om$, then the sequence of images $\{f(\om_k)\}$ is an exhaustion for $f(\om)$.

First, we show $f(\om)$ is an NC domain. All required properties in Definition \ref{ncd} of the sequence $f(\om_k)$ are immediate from the corresponding properties of $\om_k$ and the fact that $f$ is NC, except possibly the containment $f(\om_k)\subset \text{int}\, f(\om_{k+1}).$ But since $f$ is $C^1,$  the classical inverse function theorem for Banach spaces (see \cite{banachbook} for a reference) implies $f$ is an open map because each $Df(x)$ is invertible. Hence, $$f(\om_k)\subset f(\text{int}\, \om_{k+1}) =\text{int}\, f(\text{int}\, \om_{k+1}) \subset \text{int}\, f(\om_{k+1}).$$

Finally, we show $f^{-1}$ is an NC function. Let $f(x_1)$ and $f(x_2)$ be in $f(\om)$ and let $s:\ch\rightarrow \ch^{(2)}$ be invertible with \begin{align} \label{btrick} s^{-1}
\begin{bmatrix}
f(x_1) & 0 \\
0 & f(x_2) 
\end{bmatrix}s \in f(\om).\end{align} It suffices to show $w:=s^{-1}\begin{bmatrix}
x_1 & 0 \\
0 & x_2 
\end{bmatrix}s$ lies in $\om,$ since we may then apply $f$ and use the fact that $f$ preserves direct sums to get $$f^{-1}\left(s^{-1}
\begin{bmatrix}
f(x_1) & 0 \\
0 & f(x_2) 
\end{bmatrix}s\right)=s^{-1}
\begin{bmatrix}
x_1 & 0 \\
0 & x_2
\end{bmatrix}s.$$ This then shows $f^{-1}$ preserves direct sums. Note that the membership $w\in \om$ does not immediately follow since $s$ is not necessarily unitary. To that end, call the expression in (\ref{btrick}) $f(z)$ for a unique $z \in \om.$ We know there is a unitary $u:\ch \rightarrow \ch^{(2)}$ such that $$x:=u^{-1}\begin{bmatrix}
x_1 & 0 \\
0 & x_2 
\end{bmatrix}u \in \om.$$ Since $f$ is NC, if we define $L:=s^{-1}u \in B(\ch),$ then $$f(x)=u^{-1}\begin{bmatrix}
f(x_1) & 0 \\
0 & f(x_2) 
\end{bmatrix}u = L^{-1}f(z)L.$$ We claim that $z=LxL^{-1},$ which proves $w \in \om,$ since $LxL^{-1}=w.$ There is unitary $v:\ch\rightarrow \ch^{(2)}$ such that $v^{-1}\begin{bmatrix}
z & 0 \\
0 & x 
\end{bmatrix}v \in \om$. For sufficiently small  $\eps>0,$ apply Lemma \ref{L}: \begin{align*}f\left(v^{-1}\begin{bmatrix}
  z & \eps(Lx-zL) \\
  0 & x
  \end{bmatrix}v\right)&=v^{-1}\begin{bmatrix}
  f(z) & \eps(Lf(x)-f(z)L) \\
  0 & f(x)
  \end{bmatrix}v \\
  &= v^{-1}\begin{bmatrix}
  f(z) & 0 \\
  0 & f(x)
  \end{bmatrix}v \\
  &= f\left(v^{-1}\begin{bmatrix}
  z & 0 \\
  0 & x
  \end{bmatrix}v\right).\end{align*} It now follows from injectivity of $f,$ that $Lx=zL,$ as desired.
\end{proof}

Recall from Section \ref{sec:main} the notation $Z_f$ denotes the zero set of the function $f$. We now prove the implicit function theorem for NC operator functions by using Theorem \ref{inverse} applied to an appropriate auxiliary function. The derivative map of this function will be shown to also have the NC bounded below property under the hypotheses of Theorem \ref{implicit}.

\begin{proof}[Proof of Theorem \ref{implicit}]
Consider the NC function $F:\om \rightarrow B(\ch)^d$ given by the formula $F(x)=(x^1,\ldots, x^{d-r},f(x)).$ We claim that $F$ satisfies the hypotheses of Theorem \ref{inverse}. The derivative of $F$ is computed as \begin{align}\label{implicitdiff}
DF(x)[h]=(h^1,\ldots,h^{d-r},Df(x)[h]). 
\end{align} 

We first show that $DF(a)$ is invertible in $B(B(\ch)^d)$, where $a\in \om$ is the point such that $\Psi(a)$ is assumed to be invertible. Let $(v,w)\in B(\ch)^{d-r}\times B(\ch)^r$ be arbitrary. By hypothesis, there is $(h^{d-r+1},\ldots, h^d)\in B(\ch)^r$ such that $$Df(a)[0,\ldots,0,h^{d-r+1},\ldots, h^d]=w-Df(a)[v,0,\ldots,0].$$ Linearity of the derivative and (\ref{implicitdiff}) then give \begin{align*}
DF(a)[v,h^{d-r+1},\ldots, h^d] &= (v,Df(a)[v,h^{d-r+1},\ldots, h^d]) \\
&=(v,Df(a)[v,0,\ldots,0]+Df(a)[0,\ldots,0,h^{d-r+1},\ldots, h^d]) \\
&= (v,w),
\end{align*} so $DF(a)$ is surjective. As $DF(a)$ is clearly injective when $\Psi(a)$ is, we conclude that $DF(a)$ is invertible.

We now show that $DF$ has the NC bounded below property by showing, for $x\in \om$, that $DF(x)$ is bounded below if and only if $\Psi(x)$ is bounded below. It is immediate to see that $\Psi(x)$ is bounded below if $DF(x)$ is, so we prove the converse. Fix $x\in \om$ such that $\Psi(x)$ is bounded below. Then there is $\eps>0,$ depending only on $x,$ such that $$\Vert Df(x)[0,\ldots,0,h^{d-r+1},\ldots, h^d]\Vert \geq \eps \max \{\Vert h^{d-r+1}\Vert, \ldots, \Vert h^d\Vert\}$$ for all $(h^{d-r+1},\ldots, h^d)\in B(\ch)^r.$ Therefore we may estimate \begin{align*}
\Vert Df(x)[h^1,\ldots,h^d]\Vert &= \Vert Df(x)[h^1,\ldots,h^{d-r},0,\ldots,0] \\
&\hspace{90pt}+Df(x)[0,\ldots,0,h^{d-r+1},\ldots,h^d]\Vert \\
&\geq \Vert Df(x)[0,\ldots,0,h^{d-r+1},\ldots,h^d]\Vert \\
&\hspace{90pt}-\Vert Df(x)[h^1,\ldots,h^{d-r},0,\ldots,0]\Vert \\
&\geq \eps \max\{\Vert h^{d-r+1}\Vert, \ldots, \Vert h^d\Vert\} \\
&\hspace{90pt}-\Vert Df(x)\Vert \max \{\Vert h^1\Vert,\ldots,\Vert h^{d-r}\Vert\}.
\end{align*} This combined with taking norms in (\ref{implicitdiff}) gives us \begin{align*}\Vert DF(x)[h^1,\ldots, h^d]\Vert&=\max \{\Vert h^1\Vert,\ldots,\Vert h^{d-r}\Vert,\Vert Df(x)[h^1,\ldots,h^d]\Vert\} \\
&\geq \frac{\eps}{\eps+\Vert Df(x)\Vert +1} \max \{\Vert h^1\Vert,\ldots,\Vert h^d\Vert\},
\end{align*}  so $DF(x)$ is bounded below.

Therefore, by Theorem \ref{inverse}, we know $F^{-1}: F(\om)\rightarrow \om$ is NC. We may write $F^{-1}$ it terms of its coordinates, say $F^{-1}=(G^1,\ldots, G^d).$ Let $V$ be the projection onto the first $d-r$ coordinates of the zero set $Z_f.$ Thus, $V$ can explicitly be written as the set of $y\in B(\ch)^{d-r}$ such that there exists $z\in B(\ch)^r$ with $(y,z)\in \om$ and $f(y,z)=0.$ Then $V$ is seen to be an NC domain with exhaustion $\{V_k\}$, where $V_k$ is defined to be the set of $y\in B(\ch)^{d-r}$ such that there exists $z\in B(\ch)^r$ with $(y,z)\in \om_k$ and $f(y,z)=0.$ (The containment $V_k\subset \text{int} \, V_{k+1}$ follows since $F$ is an open map by Theorem \ref{bbthm} and the classical Banach space inverse function theorem.) Now define $\phi:V\rightarrow B(\ch)^r$ by $$\phi(y):=(G^{d-r+1}(y,0),\ldots, G^d(y,0)).$$ It is immediate to check that $\phi$ is an NC function.

Let $y\in V.$ From the definitions, \begin{align*}
(y,0)&= F(G^1(y,0),\ldots, G^{d-r}(y,0),\phi(y)) \\
&= (G^1(y,0),\ldots, G^{d-r}(y,0),f(F^{-1}(y,0))).
\end{align*} Therefore $y=(G^1(y,0),\ldots, G^{d-r}(y,0))$ and $f(F^{-1}(y,0))=0$, so $(y,\phi(y))\in Z_f.$ Conversely, let $x=(y,z)\in Z_f,$ where $y \in B(\ch)^{d-r}$ and $z\in B(\ch)^r$. Then $y\in V$ and $F(x)=(y,f(x))=(y,0).$ Thus, $(y,z)=F^{-1}(y,0)$, which implies $z=\phi(y).$ This establishes the desired parametrization of $Z_f.$
\end{proof}

Now we come to the proofs of the main results concerning \emph{strong} NC functions. Theorem \ref{strongbb} and its corollary are our primary applications of the shift form construction from Section \ref{sec: strongshift}. The reader may want to review that section before proceeding with the following proof.

\begin{proof}[Proof of Theorem \ref{strongbb}]
Suppose there is $x \in \om$ such that $Df(x)$ is not bounded below. Then we can find a sequence $h_n\in B(\ch)^d$ of unit vectors such that \[\Vert Df(x)[h_n]\Vert \rightarrow 0.\] Let $\{\om_k\}$ be an exhaustion for $\om$ as in Definition \ref{sncf} and say the point $x$ lies in $\om_k.$ By Lemma \ref{sotcompact} (ii), there are unitaries $v_n$ and a point $(x',h')\in B(\ch)^{2d}$ with $h'\neq 0$ such that $$v_n^*(x,h_n)v_n\rightarrow (x',h')$$ in SOT along a subsequence $n_j$. Since $\om_k$ is unitarily invariant and SOT-closed, it follows that $v_{n_j}^*xv_{n_j} \in \om_k$ for every $j$ and that $x'\in \om_k.$ Therefore, by the discussion following Definition \ref{sncf} on the SOT continuity of the derivative of a strong NC function, we have $$v_{n_j}^*Df(x)[h_{n_j}]v_{n_j}=Df(v_{n_j}^*xv_{n_j})[v_{n_j}^*h_{n_j}v_{n_j}] \rightarrow Df(x')[h']$$ in SOT. But by the choice of $h_n$ and because the $v_n$ are unitary, we also have $$v_{n_j}^*Df(x)[h_{n_j}]v_{n_j}\rightarrow 0$$ in norm. Therefore, $Df(x')[h']=0,$ contradicting the hypothesis of injectivity of $Df(x').$
\end{proof}

\begin{proof} [Proof of Corollary \ref{strongbbcor}]
Apply Theorem \ref{injthm} to conclude that each $Df(x)$ is injective. Then by Theorem \ref{strongbb}, each $Df(x)$ is in fact bounded below since $f$ is strong NC. Theorem \ref{bbthm} then implies the desired conclusion.
\end{proof}

\bigskip

 \emph{Acknowledgements.} This work was partially supported by the National Science Foundation Grant DMS 1565243.

\bibliography{Inverse} 
\bibliographystyle{acm}

\address

\end{document}